\newcommand{\RR}{{\rm I}\kern-0.18em{\rm R}}
\newcommand{\h}{{\rm I}\kern-0.18em{\rm H}}
\newcommand{\K}{{\rm I}\kern-0.18em{\rm K}}
\newcommand{\1}{{\rm 1}\kern-0.22em{\rm I}}
\font\tenmath=msbm10 \font\sevenmath=msbm7 \font\fivemath=msbm5
\font\teneusb=eusb10 \font\seveneusb=eusb7 \font\fiveeusb=eusb5
\def\eusb{\fam\eusbfam}
\def\frak R{\eusb R}
\font\tenams=msam10 \font\sevenams=msam7 \font\fiveams=msam5
\newtheorem{rem}{Remark}
\newtheorem{prop}{Proposition}
\newtheorem{corr}{Corollary}
\newtheorem{lem}{Lemma}
\newtheorem{theo}{Theorem}
\newtheorem{defi}{Definition}
    \font\tenbifull=cmmib10 % bold math italic
    \font\tenbimed=cmmib7
    \font\tenbismall=cmmib5
\mathchardef\bbGamma="7000 \mathchardef\bbDelta="7001
\mathchardef\bbPhi="7002 \mathchardef\bbAlpha="7003
\mathchardef\bbXi="7004 \mathchardef\bbPi="7005
\mathchardef\bbSigma="7006 \mathchardef\bbUpsilon="7007
\mathchardef\bbTheta="7008 \mathchardef\bbPsi="7009
\mathchardef\bbOmega="700A \mathchardef\bbalpha="710B
\mathchardef\bbbeta="710C \mathchardef\bbgamma="710D
\mathchardef\bbdelta="710E \mathchardef\bbepsilon="710F
\mathchardef\bbzeta="7110 \mathchardef\bbeta="7111
\mathchardef\bbtheta="7112 \mathchardef\bbiota="7113
\mathchardef\bbkappa="7114 \mathchardef\bblambda="7115
\mathchardef\bbmu="7116 \mathchardef\bbnu="7117
\mathchardef\bbxi="7118 \mathchardef\bbpi="7119
\mathchardef\bbrho="711A \mathchardef\bbsigma="711B
\mathchardef\bbtau="711C \mathchardef\bbupsilon="711D
\mathchardef\bbphi="711E \mathchardef\bbchi="711F
\mathchardef\bbpsi="7120 \mathchardef\bbomega="7121
\mathchardef\bbvarepsilon="7122 \mathchardef\bbvartheta="7123
\mathchardef\bbvarpi="7124 \mathchardef\bbvarrho="7125
\mathchardef\bbvarsigma="7126 \mathchardef\bbvarphi="7127
\date{}
\begin{document}

\title{Asymptotic distribution of independent random vectors given their sum}

\author{D. Rabenoro}

%\authorrunning{Short form of author list} % if too long for running head

%\institute{D. Rabenoro \at
   %           Sorbonne Universit\'e, Laboratoire Probabilit\'es, Statistique et  Mod\'elisation \\
      %        4, place Jussieu, 75005 Paris, France \\
         %    \email{dimbihery.rabenoro@inria.fr} \\          
            % \emph{Present address:} INRIA, Epione project-team,\\
       % B.\ P.\  93, 2004 route des Lucioles\\
     % 06902 Sophia Antipolis Cedex, France \\
%}

%\institute{D. Rabenoro \at
 %             INRIA, Epione project-team,\\
  %      B.\ P.\  93, 2004 route des Lucioles\\
   %     06902 SOPHIA ANTIPOLIS Cedex\\
    %    FRANCE\\
     %         \email{dimbihery.rabenoro@inria.fr}}

\date{Received: date / Accepted: date}
% The correct dates will be entered by the editor

\maketitle

\begin{abstract}
In this paper, we present a conditional principle of Gibbs type for independent nonidentically distributed random vectors. We obtain this result by performing Edgeworth expansions for densities of sums of independent random vectors. 
%\keywords{Conditional limit theorem \and Exponential change of measure \and Edgeworth expansion}
%\subclass{60B10 \and 60E10 \and 60G50 \and 62B99}
\end{abstract}

%\tableofcontents

\section{Introduction}
\label{intro}

Let $(X_{i})_{i \geq 1}$ be a sequence of independent not necessarily identically distributed (i.n.n.i.d.) random variables, valued in $\mathbb{R}^{d}$ with $d \geq 1$, except in section \ref{IntroGCP}. Throughout this paper, such a variable is called a random vector (r.v.). We denote by $\mathcal{B}^{d}$ the Borel $\sigma$-algebra of $\mathbb{R}^{d}$ and by $\Lambda^{d}$ the Lebesgue measure on $\mathcal{B}^{d}$. When $\mathcal{E}=\mathbb{R}^{d}$, our main result is a conditional principle of Gibbs type for the $X_{i}$'s, which is closely related to the Gibbs Conditioning Principle (GCP) (see \cite{Stroock and Zeitouni 1991}), recalled below.

\subsection{The Gibbs Conditioning Principle}\label{IntroGCP}
 
Here, we assume that the $X_{i}$'s  are \textit{i.i.d.}, with common distribution $P_{X}$, valued more generally in a Polish space $\mathcal{E}$. Let $f$ be a measurable function from $\mathcal{E}$ to $\mathbb{R}$. Let $a \in \mathbb{R}$ and $\delta >0$. For $1 \leq k<n$, let $R_{nak}^{f,\delta}$ be the distribution of $X_{1}^{k} := (X_{i})_{1 \leq i \leq k}$ given  $\left\{ \frac{1}{n} \sum\limits_{i=1}^{n} f \left( X_{i} \right) \in [a-\delta, a+\delta] \right\}$. Under suitable assumptions on $P_{X}$, the GCP asserts that for $a \neq \mathbb{E}_{P_{X}}(f)$ and any Borel set $B \subset \mathcal{E}^{k}$, 
\begin{equation}\label{gcp}
\lim\limits_{\delta \rightarrow 0} \enskip \lim\limits_{n \rightarrow \infty} R_{nak}^{f,\delta}(B) = (\gamma^{a})^{\otimes k}(B), 
\end{equation}

\noindent
where $\gamma^{a}$ is a so-called Gibbs measure. Namely, $\gamma^{a}$ is defined by 
\begin{equation}\label{gammaa}
\frac{d\gamma^{a}}{dP_{X}}(x) = 
\frac{\exp \left( \theta^{a} f(x) \right)}{\int_{\mathcal{E}} \exp \left( \theta^{a} f(u) \right) dP_{X}(u)}, \enskip \textrm{for } x \in \mathcal{E}, 
\end{equation}

\noindent
where $\theta^{a} \in \mathbb{R}$ satisfies that $\mathbb{E}_{\gamma^{a}}(f)=a$. Further, $\gamma^{a}$ minimizes the relative entropy $H( \cdot | P_{X})$ subject to a suitable constraint. We refer to \cite{Csiszar 1984} for results of that type. Following \cite{Stroock and Zeitouni 1991}, the GCP has a natural interpretation in Statistical Mechanics, where $f$ is seen as the energy. Then, the GCP describes the distribution of a typical small subset in a system composed of a very large number of identical particles, under a condition of fixed averaged energy. The GCP has been extended in several directions. In \cite{Dembo and Zeitouni 1996}, the GCP is established for increasingly large sub-systems, that is, the fixed $k<n$ is replaced by a sequence $(k_{n})_{n \geq 1}$, where $k_{n}=o(n)$ as $n \rightarrow \infty$. Then, in \cite{Leonard and Najim 2002}, an extension of Sanov's Theorem allows to consider more general assumptions for the GCP. Finally, in \cite{Cattiaux and Gozlan 2007}, the enlargement of the conditioning event also depends on $n$, so that there is only one limit in $(\ref{gcp})$. While all these generalizations concern the i.i.d. case and an extension of the GCP has been obtained in the Markov case in \cite{Schroeder 1993}, the independent non identically distributed case has not been investigated yet. The main result of this paper is a Gibbs-type conditional limit theorem for this case.

\subsection{The conditioning point approach}

The proof of the GCP relies on the exchangeability of the $X_{i}$'s under the conditioning event, which holds when the $X_{i}$'s are i.i.d. This allows to interpret this event in terms of the empirical distribution of the $X_{i}$'s and then apply Sanov's large deviations theorem (see \cite{Dembo and Zeitouni 1993}). \textit{However, this exchangeability does not hold anymore in the i.n.n.i.d. case}. In order to overcome this lack, we consider the conditioning point approach, where in the GCP, instead of enlarging the conditioning event as in $(\ref{gcp})$, one considers the distribution $R_{nak}^{f}$ of $X_{1}^{k}$ given $\left\{\sum\limits_{i=1}^{n} f \left( X_{i} \right) = na \right\}$, in the sense of the regular conditional distribution (see \cite{Freedman 1983}) for $X_{1}^{k}$ given $\sum\limits_{i=1}^{n}f\left( X_{i} \right)$. This approach has been used in \cite{Diaconis and Freedman 1988}, where for a sequence $(X_{i})_{i \geq 1}$ of i.i.d. r.v.'s valued in $\mathbb{R}$, the distribution $Q_{nak}$ of $X_{1}^{k}$ given $\left\{\sum\limits_{i=1}^{n} X_{i}=na \right\}$ is studied. Therein, \textit{and in the sequel}, $k$ denotes $k_{n}$, where $(k_{n})_{n \geq 1}$ is a sequence of integers. In \cite{Diaconis and Freedman 1988}, it is assumed that the $X_{i}$'s have a density with respect to (w.r.t.) $\Lambda^{1}$, which implies that $Q_{nak}$ has a density $q_{nak}$ w.r.t. $\Lambda^{k}$. Then, a saddlepoint approximation of $q_{nak}$ (after normalization) provides the following limit theorem, where $d_{TV}$ denotes the total variation distance. 

\begin{theo}(Theorem 1.6. in \cite{Diaconis and Freedman 1988})
\label{DFtheo}
Let $(X_{i})_{i \geq 1}$ be a sequence of i.i.d. r.v.'s valued in $\mathbb{R}$, that is $d=1$. Under suitable regularity assumptions, uniformly in $a$,
\begin{equation}\label{DFconcluTheo}
d_{TV}\left( Q_{nak};\left(\gamma_{I}^{a}\right)^{\otimes k}\right) = 
O\left( \frac{k}{n}\right), 
\end{equation}

\noindent
where $\gamma_{I}^{a}$ is a Gibbs measure on $\mathbb{R}$ defined by 
\begin{equation*}
\frac{d\gamma_{I}^{a}}{dP_{X}}(x) = \frac{\exp \left( \theta^{a}x \right)}{\int_{\mathbb{R}} \exp \left( \theta^{a}u \right)dP_{X}(u)}, \quad x \in \mathbb{R}. 
\end{equation*}

\noindent
Here, $\theta^{a} \in \mathbb{R}$ satisfies that $\mathbb{E}_{\gamma_{I}^{a}}(I)=a$, where  $I$ is the identity function on $\mathbb{R}$.
\end{theo}
   
\noindent
We discovered that this point approach allows to extend the GCP to i.n.n.i.d. $X_{i}$'s. However, the strategy of \cite{Diaconis and Freedman 1988} is not adapted to $R_{nak}^{f}$, which (in general) does not have a density on $\mathcal{E}^{k}$. We rather consider, for $X_{i}$'s valued in $\mathbb{R}^{d}$ and $a \in \mathbb{R}^{d}$, the distribution (still denoted by $Q_{nak}$) of $X_{1}^{k}$ given $\left\{ S_{1,n}=na \right\}$, where $S_{1,n}:=\sum\limits_{i=1}^{n} X_{i}$. Then, by Lemma \ref{densCond} below, $Q_{nak}$ has a density w.r.t. $\Lambda^{dk}$, provided that for all $i \geq 1$, $X_{i}$ has a positive one w.r.t. $\Lambda^{d}$. Finally, our version of the GCP, which is our main result (Theorem \ref{mainTheo} in Section \ref{sec:2}), is a wide generalization of Theorem \ref{DFtheo}.

\begin{lem}\label{densCond}
Let $(X_{i})_{i \geq 1}$ be a sequence of i.n.n.i.d. r.v.'s. Assume that that for all $i \geq 1$, $X_{i}$ has a positive density $p_{i}$ w.r.t. $\Lambda^{d}$. For any $n \geq 1$, let $J_{n} \subset \left\{ 1, ..., n \right\}$ with $\left|J_{n}\right|<n$. Set $L_{n} := \left\{ 1, ..., n \right\} \setminus J_{n}$ and $S_{L_{n}} := \sum\limits_{i \in L_{n}} X_{i}$. Then, for $s \in \mathbb{R}^{d}$, the distribution $Q_{s,J_{n}}$ of $(X_{i})_{i \in J_{n}}$ given $\left\{S_{1,n}=s\right\}$ has a density $q_{s,J_{n}}$ w.r.t. $\Lambda^{d\left|J_{n}\right|}$, defined for $x=(x_{i})_{i \in J_{n}} \in (\mathbb{R}^{d})^{\left|J_{n}\right|}$ by 
\begin{equation}\label{densCondFormule}
q_{s,J_{n}}(x) = \frac{ \left[\prod\limits_{i \in J_{n}} p_{i}(x_{i}) \right]
p_{S_{L_{n}}}\left(s-\sum\limits_{i \in J_{n}} x_{i} \right) }
{p_{S_{1,n}}\left(s\right)},  
\end{equation}

\noindent
where $p_{S_{L_{n}}}$ and $p_{S_{1,n}}$ are respectively the densities of $S_{L_{n}}$ and $S_{1,n}$ w.r.t. $\Lambda^{d}$. 
\end{lem}

\begin{proof}
This follows from the change of variable formula. 
\end{proof}

\noindent
This paper is organized as follows. The main result is stated in Section \ref{sec:2}. The sequel is devoted to its proof, given in Section \ref{sec:4}, after some preliminary results in Section \ref{sec:3}. The proof of some technical lemmas are deferred to the Appendix.

\section{Main result and Examples}
\label{sec:2}

\subsection{Exponential tilting}

For any r.v. $X$, its moment generating function (mgf) $\Phi_{X}$ is defined for $\theta \in \mathbb{R}^{d}$ by
\begin{equation*}
\Phi_{X}(\theta)=\mathbb{E} \left[ \exp \langle \theta, X \rangle \right] \in (0,\infty].     
\end{equation*}

\noindent
Set $\Theta_{X}:=\left\{ \theta \in \mathbb{R}^{d} : \Phi_{X}(\theta)<\infty \right\}$, which is convex and called the \textit{domain} of $\Phi_{X}$. Let $\kappa_{X}:=\log \Phi_{X}$ be the cumulant-generating function (cgf) of $X$. Before introducing the exponential tilting below, we recall the following fundamental result on $\kappa_{X}$. 

\begin{theo}\label{kapPte}
$($Theorem 7.1. in \cite{Barndorff-Nielsen 2014}$)$ $\kappa_{X}$ is a closed convex function on $\mathbb{R}^{d}$ and is strictly convex on $\Theta_{X}$ if $P_{X}$ is not concentrated on an affine subspace of $\mathbb{R}^{d}$.
\end{theo}

\begin{defi}
Let $X$ be a r.v. and $\theta \in \Theta_{X}$. The exponentially tilted measure $\widetilde{P}^{\theta}_{X}$ is defined by 
\begin{equation*}
\frac{d\widetilde{P}^{\theta}_{X}}{dP_{X}}(x) = 
\frac{\exp \langle \theta ,x \rangle}{\Phi_{X}(\theta)}, \enskip \textrm{for } x \in \mathbb{R}^{d}.    
\end{equation*}

\noindent
Denote by $\widetilde{X}^{\theta}$ a r.v. whose probability distribution is $\widetilde{P}^{\theta}_{X}$. 
\end{defi}

\begin{rem}
The Gibbs measure $\gamma^{a}$ appearing in $(\ref{gammaa})$ is $\widetilde{P}^{\theta^{a}}_{f(X_{1})}$. 
\end{rem}

\begin{lem} \label{ppteTilting}
Let $X$ and $Y$ be i.n.n.i.d. r.v.'s which have densities w.r.t. $\Lambda^{d}$. Then, for all $\theta \in \Theta_{X}$, 
\begin{equation*}
\mathbb{E}\left[\widetilde{X}^{\theta}\right] = \nabla \kappa_{X}(\theta)
\quad \textrm{and} \quad 
\mathrm{Cov}\left(\widetilde{X}^{\theta}\right) = \mathrm{Hess}(\kappa_{X})(\theta).
\end{equation*}

\noindent
Furthermore, for all $\theta \in \Theta_{X} \cap \Theta_{Y}$, ~$\mathbb{E}\left[ \widetilde{X+Y}^{\theta} \right]=\mathbb{E}\left[ \widetilde{X}^{\theta}+\widetilde{Y}^{\theta} \right]$. 
\end{lem}

\begin{proof}
This follows from the change of variable formula. 
\end{proof}

\noindent\\
We will assume that the $(\Phi_{X_{i}})_{i \geq 1}$ have a common domain $\Theta$. \textit{Throughout the sequel, for any $\theta \in \Theta$, $\left(\widetilde{X}_{i}^{\theta}\right)_{i \geq 1}$ is assumed to be a sequence of i.n.n.i.d. r.v.'s defined on a same probability space}.

\noindent\\
For any $n \geq 1$, since the conditioning event defining $Q_{nak}$ is 
$\left\{ S_{1,n}=na \right\}$, we search $\theta \in \Theta$ such that $\mathbb{E}\left[ \widetilde{S}_{1,n}^{\theta} \right]=na$, which is (by Lemma \ref{ppteTilting}) equivalent to 
\begin{equation}\label{equaTilting}
\nabla \overline{\kappa}_{n}(\theta)=a, 
\quad \textrm{where } 
\overline{\kappa}_{n}:=\frac{1}{n} \sum\limits_{i=1}^{n} \nabla \kappa_{i}.
\end{equation}

\noindent
We will prove in Section \ref{exisTilted} that, under suitable assumptions, 
$(\ref{equaTilting})$ has a unique solution denoted by $\theta_{n}^{a}$, which defines the limiting distribution of $Q_{nak}$.

\subsection{Notations}

\begin{itemize}

\item For any $D \geq 1$ and any set $E \subset \mathbb{R}^{D}$, we denote respectively by $\mathrm{int}(E)$, $\mathrm{c\ell}(E)$ and $\mathrm{conv}(E)$ the interior, the closure and the convex hull of $E$.  

\item Assuming that the $(X_{i})_{i \geq 1}$ have a common support $S_{X}$, set 
\begin{equation*}
C_{X}:=\mathrm{c\ell}(\mathrm{conv}(S_{X}))    
\end{equation*}

\item For $v=(v_{i})_{1 \leq i \leq D} \in \mathbb{R}^{D}$, $\left\| v \right\|$ denotes its euclidean norm. Set $\left\| v \right\|_{\infty}:=\max\limits_{1 \leq i \leq D} |v_{i}|$. If $M$ is a $D \times D$ matrix, we still denote by $\left\| M \right\|$ and $\left\| M \right\|_{\infty}$ the subordinate norms. 

\vspace{.2cm}

\item For any symmetric matrix $C$, denote by $\lambda_{\min}(C)$ (resp. $\lambda_{\max}(C)$) its smallest (resp. largest) eigenvalue. 

\vspace{.2cm}

\item For all $i \geq 1$, set $\Phi_{i}:=\Phi_{X_{i}}$ ~;~ $\kappa_{i} :=\kappa_{X_{i}}$ and $m_{i} := \nabla \kappa_{i}$.

\item For all $i \geq 1$ and $\theta \in \Theta$, set $\widetilde{C}_{i}^{\theta} := \mathrm{Cov}\left(\widetilde{X_{i}}^{\theta}\right)$. Let $\widetilde{p}_{i}^{\theta}$ be the density of $\widetilde{X}_{i}^{\theta}$ and $\widetilde{\xi}_{i}^{\theta}$ be its characteristic function. 

\item Let $\Sigma$ be a sub $\sigma$-algebra of $\mathcal{B}^{D}$. For all probability measures $P$ and $Q$ on $\mathbb{R}^{D}$, the total variation distance between $P$ and $Q$ on $\Sigma$ is denoted by 
\begin{equation*}
d_{TV}^{\Sigma}\left(P;Q\right) := 2\sup\limits_{B \in \Sigma} \left| P(B) - Q(B) \right|. 
\end{equation*}

\end{itemize}

\subsection{Assumptions}

We list below all the assumptions of our main result. 

\subsubsection{General assumptions}

$\left( \mathcal{S}upp \right)$ : The $(X_{i})_{i \geq 1}$ have a common support $S_{X}$.

\noindent\\
$\left( \mathcal{D}en \right)$ : For all $i \geq 1$, $X_{i}$ has a positive density $p_{i}$ w.r.t. the Lebesgue measure.

\subsubsection{Assumptions on mgf's and cgf's}

$\left(\mathcal{D}om \right)$ : The $\Phi_{i}$'s have a common domain $\Theta$, so that for all $i \geq 1$, $\Theta_{i}=\Theta$.  

\noindent\\
$\left( \mathcal{L}gt \right)$ : For all $i \geq 1$,  $X_{i}$ has light tails, in the sense that $\mathrm{int}(\Theta) \neq \varnothing$.

\noindent\\
$\left(\mathcal{S}tp \right)$ : For all $i \geq 1$, $\kappa_{i}$ is steep (see Definition \ref{defiSteep} in section \ref{exisTilted}). 

\noindent\\
$\left(\mathcal{S}tc \right)$ : For all $i \geq 1$, $\kappa_{i}$ is strictly convex on $\Theta$.

\noindent\\
$\left(\mathcal{B}d\theta \right)$ : For all $a \in \mathrm{int}(C_{X})$, there exists a compact $K^{a} \subset \mathrm{int}(\Theta)$ such that 
\begin{equation*}
\left\{\theta_{n}^{a} : n \geq 1\right\} \subset K^{a}. 
\end{equation*}

\subsubsection{Assumptions on the tilted densities}\label{asTilt}

$\left( \mathcal{C}v \right)$ : For any compact $K \subset \mathrm{int}(\Theta)$, 
\begin{equation*}
0 < \lambda_{\min}^{K} \leq \lambda_{\max}^{K} < \infty,
\end{equation*}

\noindent
where $\lambda_{\min}^{K} := \inf\limits_{i \geq 1} ~\inf\limits_{\theta \in K} \lambda_{\min}\left(\widetilde{C}_{i}^{\theta}\right)$ and $\lambda_{\max}^{K} := \sup\limits_{i \geq 1} ~\sup\limits_{\theta \in K} \enskip \lambda_{\max}\left(\widetilde{C}_{i}^{\theta}\right)$.

\noindent\\
$\left( \mathcal{A}m4 \right)$ : For any compact $K \subset \mathrm{int}(\Theta)$,   
\begin{equation*}
\sup\limits_{i \geq 1} ~\sup\limits_{\theta \in K} 
~\mathbb{E} \left[ \left\| \widetilde{X}^{\theta}_{i} - m_{i}(\theta) \right\|^{4} \right] < \infty.
\end{equation*}

\noindent\\
$(\mathcal{C}f1)$ : For any compact $K \subset \mathrm{int}(\Theta)$, there exist positive constants $\delta_{K}, C_{K}$ and $R_{K}$, satisfying:
\begin{equation*}
~\sup\limits_{i \geq 1} ~\sup\limits_{\theta \in K}  ~\sup\limits_{\|t\| \geq R_{K}} ~
\left|\widetilde{\xi}^{\theta}_{i}(t)\right| \leq \frac{C_{K}}{\|t\|^{\delta_{K}}}.  
\end{equation*}

\noindent\\
$(\mathcal{C}f2)$ : For any compact $K \subset \mathrm{int}(\Theta)$, for all $\beta > 0$, 
\begin{equation*}
~\sup\limits_{i \geq 1} ~\sup\limits_{\theta \in K} 
~\sup\limits_{\|t\| \geq \beta}
~\left| \widetilde{\xi}^{\theta}_{i}(t) \right| < 1. 
\end{equation*}

\subsection{Discussion on assumptions}

\begin{rem}
Theorem \ref{kapPte} gives a sufficient condition for $\left(\mathcal{S}tc \right)$. 
\end{rem}

\begin{rem}
For any $a \in int(C_{X})$, the assumptions on the tilted densities are applied to a compact $K^{a}$ deriving from $\left( \mathcal{B}d\theta \right)$. This avoids to check assumptions requiring the expression of $\theta_{n}^{a}$. However, Remark 
$\ref{remWeakAs}$ in Section $\ref{sec:3}$ provides weaker assumptions, involving the $\left(\widetilde{X}_{i}^{\theta_{n}^{a}}\right)$, for which $\left( \mathcal{B}d\theta \right)$ is not necessary. When $d=1$, $\left( \mathcal{B}d\theta \right)$ can be replaced by a natural uniformity assumption (see Section $\ref{sectionD1}$), denoted by $(\mathcal{U}\kappa)$. When $d>1$, sufficient conditions for $\left( \mathcal{B}d\theta \right)$ are given in Remark $\ref{condBDtheta}$.
\end{rem}

\begin{rem}
In order to insure that $(\ref{DFconcluTheo})$ holds uniformly in $a$ in Theorem $\ref{DFtheo}$, the assumptions on the tilted densities in \cite{Diaconis and Freedman 1988} are of the form $\sup\limits_{\theta \in \Theta} F(\theta)<\infty$. Such assumptions, involving the whole $\Theta$, are not easy to check (see the Examples in \cite{Diaconis and Freedman 1988}). For our main result, we do not require this uniformity in $a$. So, under $\left( \mathcal{B}d\theta \right)$, our analogue assumptions are of the form $\sup\limits_{i \geq 1} \sup\limits_{\theta \in K} F_{i}(\theta)<\infty$, where $K$ is compact. Actually, for all $i \geq 1$, $F_{i}$ is continuous, so that one only needs to check the finiteness of a $\sup\limits_{i \geq 1}$.  
\end{rem}

\begin{rem}
$(\mathcal{C}f1)$ is a classical assumption on Fourier transforms, related to the regularity of $\widetilde{p}_{i}^{\theta}$. For example, when $d=1$, it holds if $\widetilde{p}_{i}^{\theta}$ are functions of uniformly $($w.r.t. $i$ and $\theta)$ bounded variations. $(\mathcal{C}f2)$ is also a smoothness assumption since it is the analogue of Condition $1.3.$ in \cite{Diaconis and Freedman 1988}. Therefore, as explained in \cite{Diaconis and Freedman 1988}, it means that for all $i \geq 1$, $p_{i}$ is not concentrated near a lattice. 
\end{rem}

\subsection{Main result}

\begin{theo}\label{mainTheo}
Assume that $\left( \mathcal{S}upp \right)$, $\left( \mathcal{D}en \right)$, $\left( \mathcal{D}om \right)$, $\left( \mathcal{L}gt \right)$, $\left( \mathcal{S}tp \right)$,  $\left( \mathcal{S}tc \right)$, $\left(\mathcal{B}d\theta \right)$, $\left( \mathcal{C}v \right)$, $\left( \mathcal{A}m4 \right)$, $(\mathcal{C}f1)$ and $(\mathcal{C}f2)$ hold. 
If $k=o(n)$, then, for any $a \in \mathrm{int}(C_{X})$, 
\begin{equation}\label{myDF}
d_{TV}^{\mathcal{B}^{dk}} \left( Q_{nak} ; \widetilde{P}_{1:k}^{\theta_{n}^{a}} \right) = O\left(\frac{k}{n}\right), 
\end{equation}

\noindent
where $\widetilde{P}_{1:k}^{\theta_{n}^{a}}$ is the joint distribution of the  independent r.v.'s $\left(\widetilde{X}_{i}^{\theta_{n}^{a}}\right)_{1 \leq i \leq k}$.  
\end{theo}

\begin{rem}
$(\ref{myDF})$ provides the rate of convergence to the limiting distribution. Here,  uniformity in $a$ does not hold, contrarily to Theorem $\ref{DFtheo}$. In \cite{Diaconis and Freedman 1988}, this uniformity is necessary for its applications. 
\end{rem}

\begin{rem}
$(\ref{myDF})$ is interpreted in Statistical Mechanics as follows $($see \cite{Khinchin 1949}$)$. Let $G$ be a system of $n$ independent non-identical components, which are of a purely energy nature. $G$ is divided in two parts: a small one $G_{1}$, composed of $k=o(n)$ components, immersed in $G_{2}$. Assume that $G$ is isolated, so that the total energy $E$ is fixed, equal to $na$. Then, $(\ref{myDF})$ describes the distribution of energy, as $n \rightarrow \infty$, of $G_{1}$ under the constraint $\left\{E=na\right\}$, when thermal equilibrium is reached $($when $G_{1}$ and $G_{2}$ have the same temperature$)$. $\theta_{n}^{a}$ is interpreted as a quantity called the inverse temperature. 
\end{rem}

\subsection{The one-dimensional case}\label{sectionD1}

Let $(X_{i})_{i \geq 1}$ be a sequence of independent r.v.'s valued in $\mathbb{R} ~(d=1)$. Under $(\mathcal{S}upp)$ and $(\mathcal{D}om)$, $\mathrm{int}(\Theta)$ and $\mathrm{int}(C_{X})$ are open intervals: $\mathrm{int}(\Theta) = (\alpha, \beta)$ and $\mathrm{int}(C_{X}) = (A,B)$, for some $\alpha, \beta, A, B \in [-\infty, \infty]$.   

\begin{defi}
Let $f : (\alpha, \beta) \longrightarrow (A,B)$ be a differentiable function. Consider the following properties:

\noindent
$(\mathcal{H})$ : For all $\theta \in (\alpha, \beta)$, ~$\frac{df}{d\theta}(\theta) > 0$ 
\quad and \quad
$\lim\limits_{\theta \rightarrow \alpha} f(\theta) = A$ \enskip ; \enskip 
$\lim\limits_{\theta \rightarrow \beta} f(\theta) = B$.

\noindent\\
$(\mathcal{H}\kappa)$ : For all $i \geq 1$, $m_{i}:=\frac{d\kappa_{i}}{d\theta}$ satisfies $(\mathcal{H})$. 

\noindent\\
$(\mathcal{U}\kappa)$ : There exist functions $f_{+}$ and $f_{-}$ which satisfy $(\mathcal{H})$, such that for all $i \geq 1$ and $\theta \in (\alpha, \beta)$, 
\begin{equation}\label{condBD1}
f_{-}(\theta) \leq m_{i}(\theta) \leq f_{+}(\theta).
\end{equation}
\end{defi}

\begin{lem}\label{lemEX1}
Assume that $(\mathcal{H}\kappa)$ and $(\mathcal{U}\kappa)$ hold. Then, for all $a \in (A,B)$ and $n \geq 1$, there exists a unique $\theta_{n}^{a} \in (\alpha, \beta)$ such that 
\begin{equation}\label{tiltExist1}
\frac{d\overline{\kappa}_{n}}{d\theta}(\theta_{n}^{a}) = a, 
\end{equation}

\noindent
and the sequence $(\theta_{n}^{a})_{n \geq 1}$ is bounded. 
\end{lem}

\begin{proof}
Any function satisfying $(\mathcal{H})$ induces a homeomorphism from $(\alpha, \beta)$ to $(A,B)$ and $(\mathcal{H})$ is stable under convex combinations of functions. We deduce  the existence of a unique $\theta_{n}^{a} \in (\alpha, \beta)$ such that $(\ref{tiltExist1})$ holds. Now, by $(\ref{condBD1})$, for any $i \geq 1$ and $n \geq 1$,  
\begin{equation*}
f_{-}(\theta_{n}^{a}) \leq m_{i}(\theta_{n}^{a}) \leq f_{+}(\theta_{n}^{a}). 
\end{equation*}

\noindent
So, for all $n \geq 1$, $f_{-}(\theta_{n}^{a}) \leq \frac{d\overline{\kappa}_{n}}{d\theta} (\theta_{n}^{a}) = a \leq f_{+}(\theta_{n}^{a})$, which implies that 
\begin{equation*}
(f_{+})^{-1}(a) \leq \theta_{n}^{a} \leq (f_{-})^{-1}(a). 
\end{equation*}
\end{proof}

\begin{corr}
When $d=1$, among the assumptions of Theorem \ref{mainTheo}, one can replace $(\mathcal{S}tp)$ and $(\mathcal{B}d\theta)$ by $(\mathcal{H}\kappa)$ and $(\mathcal{U}\kappa)$, and keep the other ones. Then the conclusion still holds. 
\end{corr}

\subsection{Examples}

\subsubsection{Normal distribution}

For all $i \geq 1$, $X_{i}$ is a r.v. with normal distribution $\mathcal{N}_{d}\left(\mu_{i},\Gamma_{i} \right)$ on $\mathbb{R}^{d}$. Assume that 
\begin{equation}\label{normalCond}
\sup\limits_{i \geq 1} \left\| \mu_{i} \right\| < \infty \quad \textrm{and} \quad 
0 < \inf\limits_{i \geq 1} \lambda_{min}(\Gamma_{i}) \leq \sup\limits_{i \geq 1} \lambda_{max}(\Gamma_{i}) < \infty. 
\end{equation}

\noindent
Clearly, $\left( \mathcal{S}upp \right)$, $\left( \mathcal{D}en \right)$, $\left( \mathcal{D}om \right)$ and  $\left( \mathcal{L}gt \right)$ hold. Recall that, for all $i \geq 1$ and \\
$\theta \in \Theta = \mathbb{R}^{d}$, $\kappa_{i}(\theta) = \langle \mu_{i},\theta \rangle + \frac{1}{2}\theta^{t}\Gamma_{i}\theta$, so that for all $\theta \in \mathbb{R}^{d}$, 
\begin{equation*}
\mathbb{E}\left[\widetilde{X}_{i}^{\theta}\right] = 
\nabla \kappa_{i}(\theta) = \mu_{i}+\Gamma_{i}\theta
\quad \textrm{and} \quad 
\mathrm{Cov} \left(\widetilde{X}_{i}^{\theta}\right) = \mathrm{Hess}(\kappa_{i})(\theta) = \Gamma_{i}.
\end{equation*}

\noindent
So, by $(\ref{normalCond})$, $(\mathcal{C}v)$ holds. By Theorem \ref{kapPte}, $\left( \mathcal{S}tc \right)$ is satisfied. Clearly, $(\mathcal{S}tp)$ holds. Set $\overline{\mu}_{n} := \frac{1}{n} \sum\limits_{i=1}^{n} \mu_{i}$ and 
$\overline{\Gamma}_{n}:=\frac{1}{n} \sum\limits_{i=1}^{n} \Gamma_{i}$. After easy calculations, we get that for any $a \in int(C_{X})$ and $n \geq 1$, the equation $\nabla \overline{\kappa}_{n} (\theta) = a$ is equivalent to 
\begin{equation}
\label{normalEqua}
\left( \overline{\Gamma}_{n}\right) \theta = a - \overline{\mu}_{n}. 
\end{equation}

\noindent
Then, $(\ref{normalCond})$ implies that for all $n \geq 1$, $(\ref{normalEqua})$ defines a unique $\theta_{n}^{a}$ and that $(\mathcal{B}d\theta)$ holds. Finally, it is well known that for all $i \geq 1$ and $\theta \in \mathbb{R}^{d}$,
\begin{equation*}
\widetilde{X}_{i}^{\theta} \sim \mathcal{N}\left(\mu_{i}+\Gamma_{i}\theta,\Gamma_{i}\right). 
\end{equation*}

\noindent
We deduce from $(\ref{normalCond})$ that $\left( \mathcal{A}m4 \right)$, $(\mathcal{C}f1)$ and $(\mathcal{C}f2)$ hold.

\subsubsection{Gamma distribution}

Fix $\beta>0$. For any $i \geq 1$, let $X_{i}$ be a r.v. valued in $\mathbb{R} ~(d=1)$, with distribution $\Gamma(\alpha_{i}, \beta)$, defined by the density
\begin{equation*}
p_{i}(x) = \frac{\beta^{\alpha_{i}}}{\Gamma(\alpha_{i})}x^{\alpha_{i}-1} \exp(-\beta x), \quad x>0.
\end{equation*}

\noindent
Assume that 
\begin{equation}\label{GammaShapeB}
0 < \alpha_{-} := \inf\limits_{i \geq 1} \alpha_{i} \leq \alpha_{+} := \sup\limits_{i \geq 1} \alpha_{i} < \infty. 
\end{equation}

\noindent
Clearly, $(\mathcal{S}upp)$ and $(\mathcal{D}om)$ hold, with $S_{X}=(0,\infty)$ and $\Theta = (-\infty,\beta)$. Indeed, for all $i \geq 1$ and $\theta \in (-\infty,\beta)$, $\Phi_{i}(\theta) = \left(1-\frac{\theta}{\beta} \right)^{-\alpha_{i}}$. Furthermore, 
\begin{equation*}
\kappa_{i}(\theta)=-\alpha_{i}\log\left( 1-\frac{\theta}{\beta} \right)
\quad ; \quad
\frac{d\kappa_{i}}{d\theta}(\theta) = \frac{\alpha_{i}}{\beta-\theta}
\quad ; \quad 
\frac{d^{2}\kappa_{i}}{d\theta^{2}}(\theta) = \frac{\alpha_{i}}{(\beta-\theta)^{2}}.
\end{equation*}

\noindent
$(\mathcal{H}\kappa)$ and $(\mathcal{U}\kappa)$ hold, since for all $i \geq 1$ and $\theta \in (-\infty,\beta)$,
\begin{equation}
\frac{d\kappa_{i}}{d\theta} \textrm{ satisfies } (\mathcal{H})
\quad \textrm{and} \quad
\frac{\alpha_{-}}{\beta-\theta} \leq \frac{d\kappa_{i}}{d\theta}(\theta)
\leq \frac{\alpha_{+}}{\beta-\theta}. 
\end{equation}

\noindent
The other assumptions obviously hold. Now, for all $i \geq 1$ and $\theta \in (-\infty,\beta)$, 
\begin{equation*}
\widetilde{X}_{i}^{\theta} \sim \Gamma(\alpha_{i},\beta-\theta).
\end{equation*}

\noindent
This, combined to $(\ref{GammaShapeB})$, implies readily that $(\mathcal{A}m4)$, $(\mathcal{C}f1)$, $(\mathcal{C}f2)$ hold.

\section{Preliminary Results}
\label{sec:3}

\subsection{The tilted density}\label{exisTilted}

In this section, we obtain, for all $n \geq 1$, the existence and unicity of $\theta_{n}^{a}$ (see Corollary \ref{LemExistTilting} below). Then, we give sufficient conditions for $(\mathcal{B}d\theta)$, when $d \geq 1$.

\subsubsection{Pairs of Legendre type}

\begin{defi}\label{defiSteep}
Let $f$ be a convex function on $\mathbb{R}^{d}$. Set $\mathrm{dom}(f) := \left\{ x \in \mathbb{R}^{d} : f(x) < \infty \right\}$. Assume that $\mathrm{int}(\mathrm{dom}(f)) \neq \varnothing$ and $f$ is differentiable throughout $\mathrm{int}(\mathrm{dom}(f))$. Then, for any boundary point $x$ of $\mathrm{dom}(f)$, we say that $f$ is \textit{steep} at $x$ if 
\begin{equation}\label{gradSteep}
\left\| \nabla f(x_{i}) \right\|  \longrightarrow \infty 
\end{equation}

\noindent
whenever $(x_{i})$ is a sequence of points in $\mathrm{int}(\mathrm{dom}(f))$ converging to $x$. Furthermore, $f$ is called \textit{steep} if it is steep at all boundary point of $\mathrm{dom}(f)$. 
\end{defi}

\begin{defi}\label{legendre}
Let $C$ be an open convex set. Let $f$ be a strictly convex and differentiable function on $C$ such that $(\ref{gradSteep})$ holds  whenever $(x_{i})$ is a sequence of points in $C$ converging to a boundary point of $C$. Then the pair $(C,f)$ is said to be of Legendre type.  
\end{defi}

\begin{lem}\label{factSteep}
Assume that $(\mathcal{D}om)$, $\left(\mathcal{L}gt \right)$, $\left(\mathcal{S}tp \right)$ and $\left(\mathcal{S}tc \right)$ hold. For $n \geq 1$, set 
\begin{equation*}
\overline{\kappa}_{n} := \frac{1}{n} \sum\limits_{i=1}^{n} \kappa_{i}.
\end{equation*}

\noindent
Then, for all $n \geq 1$, the pair $(\mathrm{int}(\Theta), \overline{\kappa}_{n})$ is of Legendre type.   
\end{lem}

\begin{proof}
By $\left(\mathcal{L}gt \right)$, $\mathrm{int}(\Theta) \neq \emptyset$. The assumptions imply readily that for all $i \geq 1$, $\kappa_{i}$ is \textit{essentially smooth} and \textit{essentially strictly convex} (see p.87-88 in \cite{Barndorff-Nielsen 2014}), from which it follows that the pair $(\mathrm{int}(\Theta),\kappa_{i})$ is of Legendre type. We deduce that $(\mathrm{int}(\Theta),\overline{\kappa}_{n})$ is also of Legendre type. In particular, the steepness of $\overline{\kappa}_{n}$ is derived from Theorem 5.27 in \cite{Barndorff-Nielsen 2014}, a characterization of steepness which is stable under convex combinations of functions.     
\end{proof}

\begin{rem}
We have actually proved that the set $\Gamma(\mathrm{int}(\Theta))$ defined below is convex: 
\begin{equation*}
\Gamma(\mathrm{int}(\Theta)):=\left\{ f : \textrm{the pair } (\mathrm{int}(\Theta),f) \textrm{ is of Legendre type} \right\}.
\end{equation*}
\end{rem}

\subsubsection{Existence and properties of $\theta^{a}_{n}$}

\begin{defi}
Let $f$ be a convex function on $\mathbb{R}^{d}$. Its conjugate function is defined on $\mathbb{R}^{d}$ by 
\begin{equation}
f^{*}(a) = \sup\limits_{x \in \mathbb{R}^{d}}\left\{ \langle x, a \rangle-f(x) \right\}
\end{equation}
\end{defi}

\begin{lem}\label{lemSteep}
Assume that $\left(\mathcal{S}upp\right)$ holds. Then, for any $n \geq 1$,
\begin{equation*}
\mathrm{int}(\mathrm{dom}((\overline{\kappa}_{n})^{*})) = \mathrm{int}(C_{X}), 
\end{equation*}

\noindent
where $\mathrm{dom}((\overline{\kappa}_{n})^{*})=\left\{ a \in \mathbb{R}^{d} : (\overline{\kappa}_{n})^{*}(a)<\infty \right\}$. 
\end{lem}

\begin{proof}
The proof is given in Appendix. 
\end{proof}

\begin{theo}\label{theoLegendre} $($Theorem 5.33. in \cite{Barndorff-Nielsen 2014}$)$ Let $f$ be a convex and lower semi-continuous function on $C:=\mathrm{int}(\mathrm{dom}(f))$. Set $C^{*}:=\mathrm{int}(\mathrm{dom}(f^{*}))$. If the pair $(C,f)$ is of Legendre type, then the gradient mapping $\nabla f$ is a homeomorphism from $C$ onto $C^{*}$, and $\nabla f^{*} = (\nabla f)^{-1}$.  
\end{theo}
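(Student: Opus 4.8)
The plan is to deduce the statement from the behaviour of the subdifferential $\partial f$ and its interaction with conjugation. The two facts I would establish first are: (i) $f$ is \emph{essentially smooth}, i.e. $\partial f(x)=\{\nabla f(x)\}$ for $x\in C$ while $\partial f(x)=\varnothing$ for every $x\notin C$; and (ii) $\nabla f$ is injective on $C$. Fact (ii) is immediate from strict convexity of $f$ on $C$: if $\nabla f(x_{1})=\nabla f(x_{2})=y$ with $x_{1}\neq x_{2}$, then $f-\langle y,\cdot\rangle$ is strictly convex and has $0$ in its subdifferential at two distinct points, which is impossible since such a point is the unique minimiser of a strictly convex function. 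Throughout I would use the Fenchel--Young relation $y\in\partial f(x)\Longleftrightarrow f(x)+f^{*}(y)=\langle x,y\rangle\Longleftrightarrow x\in\partial f^{*}(y)$, i.e. $\mathrm{gph}(\partial f^{*})$ is $\mathrm{gph}(\partial f)$ with the two coordinates swapped.

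For (i), I would note that the part concerning $x\in C$ is the standard fact that a convex function differentiable at an interior point of its domain has there the single subgradient $\nabla f(x)$. The substantive claim is that $\partial f(x)=\varnothing$ at every boundary point $x$ of $\mathrm{dom}(f)$, and this is where steepness enters. Suppose $y\in\partial f(x)$ for such an $x$; fix $z\in C$ and set $\phi(\lambda):=f\big(x+\lambda(z-x)\big)$, finite and convex on $[0,1]$ and, by convexity of $C$, differentiable on $(0,1]$. The subgradient inequality gives $\phi(\lambda)-\lambda\langle y,z-x\rangle\ge\phi(0)$, so the convex function $\lambda\mapsto\phi(\lambda)-\lambda\langle y,z-x\rangle$ is minimised at $\lambda=0$; monotonicity of the difference quotients of a convex function then forces $\phi'(\lambda)\ge\langle y,z-x\rangle$ for all small $\lambda>0$, contradicting Theorem~\ref{theoSteep}, which says that steepness of $f$ is equivalent to $\phi'(\lambda)\downarrow-\infty$ as $\lambda\downarrow0$. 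Hence $\partial f(x)=\varnothing$, so $\mathrm{gph}(\partial f)=\{(x,\nabla f(x)):x\in C\}$ and $\mathrm{dom}(\partial f)=C$.

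Next I would run the duality argument. By (i) and Fenchel--Young, $\mathrm{gph}(\partial f^{*})=\{(\nabla f(x),x):x\in C\}$. For $y\in\nabla f(C)$ this makes $\partial f^{*}(y)$ a nonempty singleton by (ii), hence nonempty and bounded, so by the standard characterisation of interior points of the domain of a convex function in terms of nonempty bounded subdifferentials, $y\in\mathrm{int}\,\mathrm{dom}(f^{*})=C^{*}$; thus $\nabla f(C)\subseteq C^{*}$. Conversely, for $y\in C^{*}$ the set $\partial f^{*}(y)$ is nonempty (an interior point of the domain), so it contains some $x$, and then $x\in\mathrm{dom}(\partial f)=C$ with $y=\nabla f(x)$ by Fenchel--Young; thus $y\in\nabla f(C)$. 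Hence $\nabla f(C)=C^{*}$, $\nabla f$ is a bijection of $C$ onto $C^{*}$, and for each $y\in C^{*}$ the singleton $\partial f^{*}(y)=\{(\nabla f)^{-1}(y)\}$ together with $y\in\mathrm{int}\,\mathrm{dom}(f^{*})$ gives that $f^{*}$ is differentiable at $y$ with $\nabla f^{*}(y)=(\nabla f)^{-1}(y)$. Finally $C^{*}$ is open and convex (the domain of a conjugate is convex) and nonempty (it equals $\nabla f(C)$), as is $C$; and $\nabla f,\nabla f^{*}$ are continuous on the open sets $C,C^{*}$ since a finite convex function differentiable on an open set is $\mathcal{C}^{1}$ there. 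Being mutually inverse continuous maps, $\nabla f$ is a homeomorphism of $C$ onto $C^{*}$.

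I expect the main obstacle to be the boundary part of (i): ruling out subgradients at boundary points of $\mathrm{dom}(f)$ is precisely where the steepness hypothesis (via Theorem~\ref{theoSteep}) is used, and it is this property that secures the identity $\mathrm{dom}(\partial f)=C$ on which the whole conjugation argument rests. The remaining ingredients — monotonicity of difference quotients of convex functions, the characterisation of $\mathrm{int}\,\mathrm{dom}$ via nonempty bounded subdifferentials, nonemptiness of $\partial f^{*}$ at interior domain points, and continuity of gradients of convex functions on open sets — are standard convex-analysis facts that I would quote rather than reprove.
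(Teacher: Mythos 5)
Your argument is correct, but note that the paper does not prove this statement at all: it is quoted verbatim as Theorem 5.33 of Barndorff-Nielsen's book (it is essentially Rockafellar's theorem on convex functions of Legendre type), so there is no in-paper proof to compare against. What you have written is the classical convex-analytic proof of that cited result, and it holds together: the reduction to essential smoothness is the right organising idea, and the boundary step is sound --- for $x\in\mathrm{dom}(f)\setminus C$ and $z\in C$ the segment $x+\lambda(z-x)$, $\lambda\in(0,1]$, lies in $C$ by the line-segment principle, so $\phi$ is differentiable there, and the subgradient inequality combined with monotonicity of difference quotients gives $\phi'(\lambda)\ge\langle y,z-x\rangle$, contradicting the characterisation of steepness in Theorem~\ref{theoSteep}; injectivity from strict convexity and the Fenchel--Young bookkeeping are also correct. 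The external facts you quote (nonempty bounded subdifferential characterises interior points of the domain of a proper lsc convex function; nonemptiness of the subdifferential at interior points; a singleton subdifferential at an interior point gives differentiability; a convex function differentiable on an open set is $\mathcal{C}^{1}$ there) are exactly the standard ingredients and are legitimately quotable. The only cosmetic caveat is that you should record that $f$ is proper (automatic here, since $f=\overline{\kappa}_{n}$ is finite on the nonempty open set $C$ and never $-\infty$), so that $f^{*}$ is proper and the subdifferential facts apply. In short: you have supplied a genuine proof where the paper supplies only a citation.
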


\begin{corr}\label{LemExistTilting}
Assume that $\left(\mathcal{S}upp\right)$,  $(\mathcal{D}om)$, $\left( \mathcal{L}gt \right)$, $\left( \mathcal{S}tp \right)$ and $\left( \mathcal{S}tc \right)$ hold. Then, for any $n \geq 1$ and $a \in \mathrm{int}(C_{X})$, there exists a unique $\theta^{a}_{n} \in \mathrm{int}(\Theta)$ such that   
\begin{equation}
\nabla \overline{\kappa}_{n}(\theta^{a}_{n}) = a. 
\end{equation}

\noindent 
Namely, for any $n \geq 1$ and $a \in \mathrm{int}(C_{X})$,
\begin{equation}\label{conjKappaN}
\theta^{a}_{n}=\nabla (\overline{\kappa}_{n})^{*}(a).   
\end{equation}
\end{corr}

\begin{proof}
Fix $n \geq 1$. We aim to apply Theorem \ref{theoLegendre} to $\overline{\kappa}_{n}$. First, by $(\mathcal{D}om)$, $\mathrm{int}(\mathrm{dom}(\overline{\kappa}_{n}))=\mathrm{int}(\Theta)$ and by Theorem \ref{kapPte}, $\overline{\kappa}_{n}$ is lower semi-continuous on $\mathrm{int}(\Theta)$. Then, Lemma $\ref{factSteep}$, Theorem $\ref{theoLegendre}$ and Lemma $\ref{lemSteep}$ imply that the gradient mapping $\nabla \overline{\kappa}_{n}$ is a homeomorphism from $\mathrm{int}(\Theta)$ to $\mathrm{int}(C_{X})$. 
\end{proof}

\begin{rem}\label{condBDtheta}
By $(\ref{conjKappaN})$, one can sometimes compute $\nabla (\overline{\kappa}_{n})^{*}$ for any $n \geq 1$, and check directly $(\mathcal{B}d\theta)$. A sufficient condition for $(\mathcal{B}d\theta)$ is given by Proposition $\ref{UkapD}$ below, which is an analogue of $(\mathcal{U}\kappa)$ when $d>1$. However, it is more difficult to check in practice than $(\mathcal{U}\kappa)$.
\end{rem}

\noindent
Set $\nabla \Gamma(\mathrm{int}(\Theta)):=\left\{ \nabla f : f \in \Gamma(int(\Theta)) \right\}$. For functions $f,g$ from $\mathrm{int}(\Theta)$ to $\mathrm{int}(C_{X})$, set 
\begin{equation*}
d_{U}(f;g) := \sup \left\{ \left\| f(\theta)-g(\theta) \right\| : \theta \in \mathrm{int}(\Theta) \right\} \in [0,\infty].
\end{equation*}

\begin{prop}\label{UkapD}
Set $N_{\kappa} := \left\{\nabla\kappa_{i} : i \geq 1 \right\}$. Assume that $\mathrm{conv}(N_{\kappa})$ is totally bounded in $\nabla\Gamma(\mathrm{int}(\Theta))$ in the following sense: For any $\eta>0$, there exists a finite number of functions $(\phi_{j})_{j \in J}$ in $\nabla \Gamma(\mathrm{int}(\Theta))$ such that 
\begin{equation}\label{NkappaCover}
\mathrm{conv}(N_{\kappa}) \subset \bigcup\limits_{j \in J} B_{U}(\phi_{j},\eta), 
\end{equation}

\noindent
where $B_{U}(\phi_{j},\eta):=\left\{ f \in \nabla\Gamma(\mathrm{int}(\Theta)) : d_{U}(\phi_{j},f)<\eta \right\}$. Then, $(\mathcal{B}d\theta)$ holds.
\end{prop}

\begin{proof}
Let $\eta>0$ and $(\phi_{j})_{j \in J}$, with $\left| J \right|<\infty$, such that $(\ref{NkappaCover})$ holds. For all $n \geq 1$, $\nabla \overline{\kappa}_{n} \in \mathrm{conv}(N_{\kappa})$, so that there exists $\phi_{j(n)} \in (\phi_{j})_{j \in J}$ such that $d_{U}\left( \nabla \overline{\kappa}_{n};\phi_{j(n)} \right) <\eta$. So, for any $a \in \mathrm{int}(C_{X})$ and $n \geq 1$, 
\begin{equation}\label{phiJtheta}
\left\| \phi_{j(n)}(\theta^{a}_{n}) - \nabla \overline{\kappa}_{n}(\theta^{a}_{n}) \right\| = \left\| \phi_{j(n)}(\theta^{a}_{n}) - a \right\| < \eta. 
\end{equation}

\noindent
Now, by Theorem \ref{theoLegendre}, all functions in $\nabla\Gamma(\mathrm{int}(\Theta))$ are homeomorphisms from $\mathrm{int}(\Theta)$ to $\mathrm{int}(C_{X})$. So, by $(\ref{phiJtheta})$, for sufficiently small $\eta$, $\left\| \theta^{a}_{n} - (\phi_{j(n)})^{-1}(a) \right\|<1$ and, for all $n \geq 1$, $\left\| \theta^{a}_{n} \right\|<1+\left\|(\phi_{j(n)})^{-1}(a)\right\| \leq 1+\max\limits_{j \in J} \left\|(\phi_{j})^{-1}(a)\right\|<\infty$, since $\left| J \right|<\infty$.
\end{proof}

\subsection{Sufficiency Theory}\label{SuffiTheory}

In this section, we prove that the total variation distance $2\sup\limits_{B \in \mathcal{B}^{dk}} \left| Q_{nak}(B)-\widetilde{P}_{1:k}(B) \right|$ is attained on 
$\sigma(T):=T^{-1}\left(\mathcal{B}^{d}\right) \subset \mathcal{B}^{dk}$, where 
$T : (\mathbb{R}^{d})^{k} \longrightarrow \mathbb{R}^{d}$ is the map defined by $T(x)=\sum\limits_{i=1}^{k} x_{i}$, for $x = (x_{i})_{1 \leq i \leq k} \in (\mathbb{R}^{d})^{k}$. The proof is based on Proposition $\ref{PropDHS}$ below, a reformulation of Proposition 2 in \cite{Diaconis Holmes and Shahshahani 2013} which relies on the co-area formula (see \cite{Federer 1969}). We refer to \cite{Diaconis Holmes and Shahshahani 2013} for detailed discussions on this formula.

\subsubsection{Sufficient sub $\sigma$-algebra}

\begin{defi}
Let $(E, \mathcal{A}, P)$ be a probability space and $\Sigma$ a sub $\sigma$-algebra of $\mathcal{A}$. A function $\nu_{P}^{\Sigma}$ from $E \times \mathcal{A}$ to $[0,1]$ is called a regular conditional probability $($RCP$)$ for $P$ given $\Sigma$ if 
\begin{align*}
& \forall x \in E, \nu_{P}(x, \cdot) \textrm{ is a probability measure on } \mathcal{A}. \\
& \forall A \in \mathcal{A}, \textrm{ the function }
x \mapsto \nu_{P}(x, A) \textrm{ is } 
\Sigma \textrm{ measurable}. \\
& \forall C \in \Sigma, \forall A \in \mathcal{A}, 
P(C \cap A) = \int_{C} \nu_{P}^{\Sigma}(x,A) P(dx). 
\end{align*}
\end{defi}

\begin{defi}
Let $P$ and $Q$ be p.m.'s on $(E, \mathcal{A})$. We say that $\Sigma \subset \mathcal{A}$ is sufficient w.r.t. $P$ and $Q$ if for all $A \in \mathcal{A}$, 
\begin{equation}\label{suffy}
\nu_{P}^{\Sigma}(\cdot,A) = \nu_{Q}^{\Sigma}(\cdot,A) \textrm{ a.e. }P \textrm{ and a.e. } Q. 
\end{equation}
\end{defi}

\begin{lem}\label{dVTsuffy}
Assume that $\Sigma \subset \mathcal{A}$ is sufficient w.r.t. $P$ and $Q$. Then, 
\begin{equation*}
d_{TV}^{\mathcal{A}}\left(P;Q\right) = d_{TV}^{\Sigma}\left(P;Q\right). 
\end{equation*}
\end{lem}

\begin{proof}
The proof is elementary. See Lemma (2.4) in \cite{Diaconis and Freedman 1987} for details. 
\end{proof}

\subsubsection{RCP for $P$ given $\sigma(T)$}

\begin{defi}
The $m$-dimensional \textbf{Hausdorff measure} $\overline{\mathcal{H}}^{m}$ on $\mathbb{R}^{D}$ is defined by 
\begin{equation*}
\overline{\mathcal{H}}^{m}(A) = 
\lim\limits_{\delta \rightarrow 0} 
~\inf \left\{ \sum\limits_{i \geq 1} \alpha_{m}\left( \frac{\mathrm{diam}(S_{i})}{2}\right)^{m} : A \subset \cup_{i \geq 1} S_{i} ~;~ \mathrm{diam}(S_{i}) \leq \delta \right\}, 
\quad A \subset \mathbb{R}^{D}, 
\end{equation*}

\noindent
where for any $S \subset \mathbb{R}^{D}$,  
$\mathrm{diam}(S)=\sup \left\{ \left\|x-y\right\| : x,y \in S \right\}$ and  $\alpha_{m}=\frac{\Gamma(1/2)^{m}}{\Gamma[(m/2)+1]}$ is the volume of the unit ball in $\mathbb{R}^{m}$. If $\mathcal{M}$ is any subset of $\mathbb{R}^{D}$, then $\overline{\mathcal{H}}^{m}$ defines a measure on the $\sigma$-algebra $\mathcal{B}(\mathcal{M})$, where 
\begin{equation*}
\mathcal{B}(\mathcal{M}):=\left\{ B \cap \mathcal{M} : B \in 
\mathcal{B}^{D}\right\}. 
\end{equation*}
\end{defi}

\begin{rem}
Assume that $\mathcal{M}$ is a $m$-dimensional submanifold of $\mathbb{R}^{D}$. Then, $\overline{\mathcal{H}}^{m}$ defines a measure on $\mathcal{B}(\mathcal{M})$ which coincides with the surface measure on $\mathcal{M}$. 
\end{rem}

\begin{defi}
Let $\phi: \mathbb{R}^{M} \longrightarrow \mathbb{R}^{N}$ be differentiable at $x \in \mathbb{R}^{M}$. The Jacobian $J\phi_{x}$ is defined by 
\begin{equation*}
J\phi_{x}=\left[ \det\left( D\phi_{x}(D\phi_{x})^{t} \right) \right]^{1/2}.    
\end{equation*}
\end{defi}

\bigskip 

\begin{prop}\label{PropDHS}
For $M>N \geq 1$, let $\phi: \mathbb{R}^{M} \longrightarrow \mathbb{R}^{N}$ be Lipschitz and differentiable on $\mathbb{R}^{M}$. Set $\sigma(\phi):=\phi^{-1}\left( \mathcal{B}^{N}\right)$. Let $P$ be a p.m. on $\left(\mathbb{R}^{M}, \mathcal{B}^{M}\right)$ having a density $p$ w.r.t. $\Lambda^{M}$. For $x \in \mathbb{R}^{M}$, set  $L_{x}^{\phi}:=\phi^{-1}\left(\left\{\phi(x)\right\}\right)$ and 
\begin{equation*}
\mu(x) := \displaystyle\int\limits_{L_{x}^{\phi}} \frac{p(u)}{J\phi_{u}} ~\overline{\mathcal{H}}^{M-N}(du). \end{equation*}

\noindent 
Define the map $\nu_{P}^{\sigma(\phi)} : \mathbb{R}^{M} \times \mathcal{B}^{M} \longrightarrow [0,1]$ as follows. If $\mu(x) \in \left\{0,\infty \right\}$, then set $\nu_{P}^{\phi}\left(x,A\right)=\delta_{x^{*}}(A)$, for some fixed $x^{*} \in \mathbb{R}^{M}$. Else, set  
\begin{equation*}
\nu_{P}^{\sigma(\phi)}\left(x,A\right) = \frac{1}{\mu(x)}
\displaystyle\int\limits_{L_{x}^{\phi} \cap A} 
\frac{p(u)}{J\phi_{u}} ~\overline{\mathcal{H}}^{M-N}(du). 
\end{equation*}

\noindent
Then, $\nu_{P}^{\sigma(\phi)}$ is a regular conditional probability for $P$ given $\sigma(\phi)$. 
\end{prop}

\begin{proof}
See Proposition 2 in \cite{Diaconis Holmes and Shahshahani 2013}.
\end{proof}

\begin{corr}\label{PQTsuffy}
For any $1 \leq k <n$, $a \in \mathrm{int}(C_{X})$ and $\theta \in \mathrm{int}(\Theta)$,
\begin{equation*}
d_{TV}^{\mathcal{B}^{dk}}\left( Q_{nak};\widetilde{P}_{1:k}^{\theta} \right) = 
d_{TV}^{\sigma(T)}\left( Q_{nak};\widetilde{P}_{1:k}^{\theta} \right) =
d_{TV}^{\mathcal{B}^{d}}\left( T\left(Q_{nak}\right) ; T\left(\widetilde{P}_{1:k}^{\theta} \right) \right), 
\end{equation*}

\noindent
where $\widetilde{P}_{1:k}^{\theta}:=\otimes_{i=1}^{k} \widetilde{P}_{i}^{\theta}$.
\end{corr}

\begin{proof}
Recall that $Q_{nak}$ and $\widetilde{P}_{1:k}^{\theta}$ have densities $q_{nak}$ and $\widetilde{p}_{1:k}^{\theta}$ w.r.t. $\Lambda^{dk}$, defined for $u = (u_{i})_{1 \leq i \leq k} \in (\mathbb{R}^{d})^{k}$ by 
\begin{equation*}
q_{nak}(u) = \frac{p_{1:k}(u)p_{S_{k+1,n}}(na-T(u))}
{p_{S_{1,n}}(na)}, \enskip \textrm{where } p_{1:k}(u) := \prod\limits_{i=1}^{k}p_{i}(u_{i}),
\end{equation*}

\noindent 
which follows from Lemma \ref{densCond}, and
\begin{equation*}
\widetilde{p}_{1:k}^{\theta}(u) = 
\frac{p_{1:k}(u) \exp \langle \theta,T(u) \rangle}
{\Phi_{1:k}(\theta)}, \enskip \textrm{where } \Phi_{1:k} := \prod\limits_{i=1}^{k}\Phi_{i}. 
\end{equation*}

\noindent
We use the notations of Proposition $\ref{PropDHS}$, which obviously applies with $M=dk$, $N=d$ and $\phi=T$. Notice that for all $x \in (\mathbb{R}^d)^{k}$, $L_{x}^{T}$ is a $d(k-1)$-dimensional submanifold of $\mathbb{R}^{dk}$. Now, for all $x \in (\mathbb{R}^d)^{k}$ and $u \in L_{x}^{T}$, $T(u)=T(x)$. Moreover, since $T$ is linear, its Jacobian is constant on $(\mathbb{R}^d)^{k}$. Denote by $JT \neq 0$ its value. So, for all 
$A \in \mathcal{B}\left((\mathbb{R}^{d})^{k}\right)$,
\begin{equation*}
\displaystyle\int\limits_{L_{x}^{T} \cap A} 
\frac{q_{nak}(u)}{JT_{u}} ~\overline{\mathcal{H}}^{d(k-1)}(du) = 
\frac{1}{JT} \left( \frac{p_{S_{k+1,n}}(na-T(x))}{p_{S_{1,n}}(na)} \right) \displaystyle\int\limits_{L_{x}^{T} \cap A} 
p_{1:k}(u) ~\overline{\mathcal{H}}^{d(k-1)}(du)
\end{equation*}

\noindent
and 
\begin{equation*}
\displaystyle\int\limits_{L_{x}^{T} \cap A} \frac{\widetilde{p}_{1:k}^{\theta}(u)}{JT_{u}} ~\overline{\mathcal{H}}^{d(k-1)}(du) = \frac{1}{JT} \left( \frac{\exp \langle \theta,T(x) \rangle}{\Phi_{1:k}(\theta)} \right) \displaystyle\int\limits_{L_{x}^{T} \cap A} p_{1}^{k}(u) ~\overline{\mathcal{H}}^{d(k-1)}(du).
\end{equation*}

\noindent
Consequently, when well defined, 
\begin{equation*}
\frac{\displaystyle\int\limits_{L_{x}^{T} \cap A} \frac{q_{nak}(u)}{JT_{u}} ~\overline{\mathcal{H}}^{d(k-1)}(du)}{\displaystyle\int\limits_{L_{x}^{T}} \frac{q_{nak}(u)}{JT_{u}} ~\overline{\mathcal{H}}^{d(k-1)}(du)}
~=~
\frac{\displaystyle\int\limits_{L_{x}^{T} \cap A} \frac{\widetilde{p}_{1:k}^{\theta}(u)}{JT_{u}} ~\overline{\mathcal{H}}^{d(k-1)}(du)}{\displaystyle\int\limits_{L_{x}^{T}} \frac{\widetilde{p}_{1:k}^{\theta}(u)}{JT_{u}} ~\overline{\mathcal{H}}^{d(k-1)}(du)}
~=~
\frac{\displaystyle\int\limits_{L_{x}^{T} \cap A} p_{1:k}(u) \overline{\mathcal{H}}^{d(k-1)}_{x} (du) }{\displaystyle\int\limits_{L_{x}^{T}} p_{1:k}(u) \overline{\mathcal{H}}^{d(k-1)}_{x} (du)}.
\end{equation*}

\noindent
So, by Proposition $\ref{PropDHS}$, for all $A \in \mathcal{B}\left((\mathbb{R}^{d})^{k}\right)$, 
\begin{equation*}
\nu_{Q_{nak}}^{\sigma(T)}\left(\cdot,A\right) = \nu_{\widetilde{P}_{1:k}^{\theta}}^{\sigma(T)}\left(\cdot,A\right) \textrm{ a.e. } Q_{nak} \textrm{ and a.e. } \widetilde{P}_{1:k}^{\theta}.
\end{equation*}

\noindent
Therefore, $\sigma(T)$ is sufficient w.r.t $Q_{nak}$ and $\widetilde{P}_{1:k}^{\theta}$ and we conclude by Lemma $\ref{dVTsuffy}$. 
\end{proof}

\subsection{Edgeworth expansion}

In the sequel, $\nu \in \mathbb{N}^{d}$ denotes an integral vector. For $\nu=(\nu_{i})_{1 \leq i \leq d}$, set $\left| \nu \right|:=\sum\limits_{i=1}^{d}\nu_{i}$.

\subsubsection{Weak assumptions}

For $n \geq 1$, let $L_{n} = \left\{\alpha_{n}, ..., \beta_{n}\right\}
\subset \left\{ 1, ..., n \right\}$ be a set of consecutive integers such that 
\begin{equation}\label{LnInfty}
\lim\limits_{n \rightarrow \infty} \left| L_{n} \right| = \infty. 
\end{equation}

\begin{theo}\label{generalEdge}
Let $\left\{ Z_{in} : n \geq 1 ~;~ i \in L_{n} \right\}$ be a triangular array $($t.a.$)$ of centered r.v.'s. which have densities w.r.t. $\Lambda^{d}$. Suppose that for each row of index $n \geq 1$, the $(Z_{in})_{i \in L_{n}}$ are i.n.n.i.d. Set 
\begin{equation*}
V_{L_{n}} := \frac{1}{\left|L_{n}\right|} \sum\limits_{i \in L_{n}} Cov(Z_{in}).  
\end{equation*}

\noindent
Assume that for all $n$ large enough, $V_{L_{n}}$ is positive-definite, which is equivalent to 
\begin{equation}\label{LminV}
\lambda_{min}(V_{L_{n}})>0,   
\end{equation}

\noindent
and setting $B_{L_{n}} := (V_{L_{n}})^{-1/2}$, that 
\begin{equation}\label{AM4ass}
\varlimsup\limits_{n \rightarrow \infty} \left( \frac{1}{\left|L_{n}\right|} \sum\limits_{i \in L_{n}} \mathbb{E}\left[ \left\| B_{L_{n}} Z_{in}\right\|^{4} \right] \right) < \infty. 
\end{equation}

\noindent
By $(\ref{LnInfty})$, for any $p \in \mathbb{N}$, there exists $N_{p} \in \mathbb{N}$ such that for all $n \geq N_{p}$, $\left|L_{n}\right| \geq p+1$. Suppose that there exists $p \in \mathbb{N}$ such that the functions 
$\left\{ g_{m,n}^{p} : n \geq N_{p} ~; ~ \alpha_{n}-1 \leq m \leq \beta_{n}-p \right\}$ defined by 
\begin{equation*} 
g_{m,n}^{p}(t) := \prod\limits_{i=m+1}^{m+p} \left| 
\mathbb{E}[\exp\left\{j \langle t, B_{L_{n}}Z_{in} \rangle \right\}]
\right| \quad \textrm{where } j \in \mathbb{C}, ~j^{2}=-1, 
\end{equation*}

\noindent
satisfy 
\begin{equation}\label{gammaCond}
\sup \left\{ \int g_{m,n}^{p}(t)dt : n \geq N_{p} ~; ~ \alpha_{n}-1 \leq m \leq \beta_{n}-p \right\} < \infty
\end{equation}

\noindent
and, for all $b>0$, 
\begin{equation}\label{betaCond}
\sup \left\{g_{m,n}^{p}(t) : \left\|t\right\|>b ~;~ n \geq N_{p} ~; ~ \alpha_{n}-1 \leq m \leq \beta_{n}-p \right\} < 1.
\end{equation}

\noindent\\
Then the density $q_{L_{n}}$ of $S_{L_{n}} := \left| L_{n} \right|^{-1/2} B_{L_{n}} 
\left( \sum\limits_{i \in L_{n}}Z_{in} \right)$ w.r.t. $\Lambda^{d}$ satisfies 
\begin{equation}\label{EdgeTilde}
\underset{x \in \mathbb{R}^{d}}{\sup} (1+\left\| x \right\|^{4})
\left| q_{L_{n}}(x)-\left\{ \phi(x) \left[ 1 + \left|L_{n} \right|^{-1/2} \sum\limits_{|\nu|=3} \overline{\chi}_{\nu,L_{n}} H_{3}^{(\nu)}(x) \right] \right\} \right| = 
O\left(\frac{1}{\left|L_{n}\right|}\right), 
\end{equation}

\noindent
where $\phi$ is the density of the standard normal distribution on $\mathbb{R}^{d}$, $\overline{\chi}_{\nu, L_{n}}$ is the average of the $\nu$th cumulants of $(B_{L_{n}}Z_{in})_{i \in L_{n}}$ and $H_{3}^{(\nu)}$ is a Hermite polynomial such that 
\begin{align*}
&\sum\limits_{|\nu|=3} \overline{\chi}_{\nu,L_{n}} H_{3}^{(\nu)}(x) = \frac{1}{6} \left[ \chi_{(3,0,...,0)}(x_{1}^{3}-3x_{1}) + ... + \chi_{(0,...,0,3)}(x_{d}^{3}-3x_{d}) \right] \\
& + \frac{1}{2} \left[ \chi_{(2,1,0,...,0)}(x_{1}^{2}x_{2}-x_{2}) + ... + \chi_{(0,...,0,1,2)}(x_{d}^{2}x_{d-1}-x_{d-1}) \right] \\
& + \left[ \chi_{(1,1,1,0,...,0)}(x_{1}x_{2}x_{3}) + ... + \chi_{(0,...,0,1,1),1}(x_{d}x_{d-1}x_{d-2}) \right].
\end{align*}

\noindent
Hereabove, we wrote $\chi_{\nu}$ to abbreviate $\overline{\chi}_{\nu,L_{n}}$. 
\end{theo}

\begin{proof}
This is the t.a. version of Theorem 19.3 in \cite{Bhattacharya and Rao 1976}, 
for which a sequence $(X_{i})_{i \geq 1}$ of i.n.n.i.d. r.v.'s is considered. Under assumptions generalized here to t.a.'s, it provides an approximation (as $n \rightarrow \infty$) of the density of $S_{n} := n^{-1/2}B_{n}\sum\limits_{i=1}^{n}X_{i}$, which is extended here. In the proof of that Theorem, the following function $h_{n}$ (for all $n$ large enough) is considered.   
\begin{equation*}
h_{n}(x):= x^{\alpha} \left( q_{n}(x) - \left\{ \phi(x) \left[ 1 + \left|n \right|^{-1/2} \sum\limits_{|\nu|=3} \overline{\chi}_{\nu,n} H_{3}^{(\nu)}(x) \right] \right\} \right), 
\quad x \in \mathbb{R}^{d}, 
\end{equation*}

\noindent
where $\alpha \in \mathbb{N}^{d}$ with $\left| \alpha \right| \leq 4$ and $\overline{\chi}_{\nu,n}$ is the average of the $\nu$th cumulants of $(B_{n}X_{i})_{i \geq 1}$. Let $\hat{h}_{n}$ be the Fourier transform of $h_{n}$. By the Fourier inversion theorem,   
\begin{equation*}
\underset{x \in \mathbb{R}^{d}} {\sup} \left| h_{n}(x) \right| \leq 
(2\pi)^{-d} \int \left| \hat{h}_{n}(t) \right| dt
\end{equation*}

\noindent
The aim of that proof is then to get estimates for $\displaystyle\int \left| \hat{h}_{n}(t) \right| dt$. Now, these estimates are obtained thanks to upper bounds for $\displaystyle\int \left| \hat{h}_{n}(t) \right| dt$ which hold for \textit{any fixed} $n$. Therefore, here, analogue upper bounds hold for \textit{any fixed row of index} $n$ of the t.a. 
\end{proof}

\begin{corr}
Let $(X_{i})_{i \geq 1}$ be a sequence of i.n.n.i.d. r.v.'s. Assume that $\left(\mathcal{S}upp \right)$, $\left(\mathcal{M}gf\right)$, $\left(\mathcal{L}gt \right)$, $\left(\mathcal{S}tp\right)$ and $\left(\mathcal{D}en\right)$ hold. So, for any $a \in \mathrm{int}(C_{X})$, we are allowed to consider the following t.a. 
\begin{equation}\label{taX}
\left\{ \widetilde{X}^{\theta_{n}^{a}}_{i}-m_{i}(\theta_{n}^{a}) : 
n \geq 1 ~;~ i \in L_{n} \right\}.  
\end{equation}

\noindent
If $(\ref{LminV})$, $(\ref{AM4ass})$, $(\ref{gammaCond})$
and $(\ref{betaCond})$ hold for this t.a., then  $(\ref{EdgeTilde})$ holds fot it.  
\end{corr}

\begin{proof}
The t.a. defined in $(\ref{taX})$ satisfies that each row of index $n \geq 1$ is composed of $L_{n}$ i.n.n.i.d. and centered r.v.'s. which have densities w.r.t. $\Lambda^{d}$. Therefore, this t.a. fulfills all the conditions of Theorem $\ref{generalEdge}$, which may be applied. 
\end{proof}

\subsubsection{Natural assumptions}

\textit{Throughout the sequel, in order to alleviate notations, we will write $~\widetilde{\cdot}$ instead of $~\widetilde{\cdot}^{\theta_{n}^{a}}~$}.

\begin{corr}
Let $(X_{i})_{i \geq 1}$ be a sequence of i.n.n.i.d. r.v.'s. satisfying $\left(\mathcal{S}upp \right)$, $\left(\mathcal{M}gf\right)$, $\left(\mathcal{L}gt \right)$, $\left(\mathcal{S}tp\right)$ and $\left(\mathcal{D}en\right)$, so that we may consider the t.a. defined in $(\ref{taX})$.  Assume \textbf{additionally} that $\left( \mathcal{B}d\theta \right)$, $\left( \mathcal{C}v \right)$, $\left( \mathcal{A}m4 \right)$, $\left( \mathcal{C}f1 \right)$ and $\left( \mathcal{C}f2 \right)$ hold. Then,  $(\ref{EdgeTilde})$ holds for this t.a.  
\end{corr}

\begin{proof}
By Lemmas $\ref{VdefPos}$-$\ref{BdCf2}$ below, $\left( \mathcal{B}d\theta \right)$, $\left( \mathcal{C}v \right)$, $\left( \mathcal{A}m4 \right)$, $\left( \mathcal{C}f1 \right)$ and $\left( \mathcal{C}f2 \right)$ imply that $(\ref{LminV})$, $(\ref{AM4ass})$, 
$(\ref{gammaCond})$ and $(\ref{betaCond})$ hold for the t.a. defined in $(\ref{taX})$.
\end{proof}

\begin{rem}\label{remWeakAs}
This means that the set of assumptions $\left\{ \left( \mathcal{B}d\theta \right), \left( \mathcal{C}v \right), \left( \mathcal{A}m4 \right), \left( \mathcal{C}f1 \right), \left( \mathcal{C}f2 \right) \right\}$ can be weakened by replacing it by $\left\{ (\ref{LminV}), (\ref{AM4ass}), (\ref{gammaCond}), (\ref{betaCond}) \right\}$ for the t.a. defined in $(\ref{taX})$.
\end{rem}

\bigskip

\begin{lem}\label{VdefPos}
For any $\theta \in \mathrm{int}(\Theta)$ and $n \geq 1$, set
\begin{equation*}
\widetilde{V}^{\theta}_{L_{n}} := \frac{1}{\left|L_{n}\right|} \sum\limits_{i \in L_{n}} \widetilde{C}_{i}^{\theta}.   
\end{equation*}
   
\noindent
Assume that $\left( \mathcal{B}d\theta \right)$ and 
$\left( \mathcal{C}v \right)$ hold. Then, for all $n \geq 1$, $\widetilde{V}_{L_{n}}$ is positive-definite. Set  
\begin{equation}\label{defB}
\widetilde{B}_{L_{n}} := \left( \widetilde{V}_{L_{n}} \right)^{-1/2}. 
\end{equation}

\noindent
For any $a \in \mathrm{int}(C_{X})$, let $K^{a} \subset \mathrm{int}(\Theta)$ be a compact such that $\left\{\theta^{a}_{n} : n \geq 1 \right\} \subset K^{a}$. Then, for all $x \in \mathbb{R}^{d}$ and $n \geq 1$, 
\begin{equation}\label{maxBmin}
\left[\lambda_{max}^{K^{a}}\right]^{-1/2} \left\|x\right\| \leq \left\|\widetilde{B}_{L_{n}} x\right\|
\leq \left[\lambda_{min}^{K^{a}}\right]^{-1/2} \left\|x\right\|. 
\end{equation}
\end{lem}

\begin{proof}
See Appendix.
\end{proof}

\begin{lem}\label{AM4lem}
Assume that $\left( \mathcal{B}d\theta \right)$,  
$\left( \mathcal{C}v \right)$ and 
$\left( \mathcal{A}m4 \right)$ hold. Then, 
\begin{equation*}
\varlimsup\limits_{n \rightarrow \infty} \left( \frac{1}{\left|L_{n}\right|} \sum\limits_{i \in L_{n}} \mathbb{E}\left[ \left\| \widetilde{B}_{L_{n}} \left( \widetilde{X}_{i} - m_{i}(\theta^{a}_{n}) \right) \right\|^{4} \right] \right) < \infty. 
\end{equation*}
\end{lem}

\begin{proof}
See Appendix.
\end{proof}

\noindent\\
For $p \geq 1$, define the functions 
$\left\{ \widetilde{g}_{m,n}^{p} : \alpha_{n}-1 \leq m \leq \beta_{n}-p ~; ~ n \geq N_{p} \right\}$ by 
\begin{equation*} 
\widetilde{g}_{m,n}^{p}(t) := 
\prod\limits_{i=m+1}^{m+p} \left| 
\mathbb{E} \left[\exp \left( j \langle t, \widetilde{B}_{L_{n}} \left( \widetilde{X}_{i}-m_{i}(\theta^{a}_{n}) \right) \rangle \right) \right] \right| =
 \prod\limits_{i=m+1}^{m+p} \left| 
\widetilde{\xi}_{i} \left( \widetilde{B}_{L_{n}} ~t \right) \right|.
\end{equation*}

\bigskip 

\begin{lem}\label{BdCf1}
Assume that $\left(\mathcal{B}d\theta\right)$ and $(\mathcal{C}f1)$ hold. Then, for any $p > \frac{1}{\delta_{K^{a}}}$, 
\begin{equation}\label{ConcluGamma}
\sup \left\{ \int \widetilde{g}_{m,n}^{p}(t)dt : \alpha_{n}-1 \leq m \leq \beta_{n}-p ~; ~ n \geq N_{p}\right\} < \infty.
\end{equation}
\end{lem}

\begin{proof}
See Appendix.
\end{proof}

\begin{lem}\label{BdCf2}
Assume that $\left(\mathcal{B}d\theta\right)$ and $(\mathcal{C}f2)$ hold. Then, for any $p \geq 1$, for all $b>0$, 
\begin{equation}\label{ConcluBeta}
\sup \left\{\widetilde{g}_{m,n}^{p}(t) : \left\|t\right\|>b ~;~ n \geq N_{p} ~; ~  \alpha_{n}-1 \leq m \leq \beta_{n}-p \right\} < 1.
\end{equation} 
\end{lem}

\begin{proof}
See Appendix.
\end{proof}

\section{Proof of Theorem \ref{mainTheo}}
\label{sec:4}

The proof follows essentially that of Theorem \ref{DFtheo}, but in a more general framework. In the sequel, for any r.v.'s $X$ and $Y$, if the distribution of $X$ given 
$\left\{Y=s\right\}$ has a density w.r.t. $\Lambda^{d}$, then we denote it by 
$p(X=\cdot | Y=s)$. 

\subsection{Overview of the proof}

For $1 \leq \ell \leq m$, set $S_{\ell,m} := \sum\limits_{j=\ell}^{m} X_{j}$ and $\widetilde{S}_{\ell,m} := \sum\limits_{j=\ell}^{m} \widetilde{X}_{j}$. Let $n \geq 1$ and $a \in \mathrm{int}(C_{X})$. With the notations of Corollary \ref{PQTsuffy}, $T\left(Q_{nak}\right)$ is  the distribution of $S_{1,k}$ given $\left\{S_{1,n}=na\right\}$ and $T\left(\widetilde{P}_{1:k} \right)$ is the distribution of $\widetilde{S}_{1,k}$. Then, by Corollary \ref{PQTsuffy}, 
\begin{equation}\label{suffi}
d_{TV}^{\mathcal{B}^{dk}}\left(Q_{nak} ; \widetilde{P}_{1:k}\right) = 
d_{TV}^{\mathcal{B}^{d}}\left( T\left(Q_{nak}\right) ; T\left(\widetilde{P}_{1:k} \right) \right).  
\end{equation}

\noindent
Now, by Scheffe's Lemma (see Lemma 2.1 in \cite{Tsybakov 2009}), we deduce that 
\begin{equation}\label{scheffe}
d_{TV}^{\mathcal{B}^{dk}}\left(Q_{nak};\widetilde{P}_{1:k}\right) = 
\displaystyle\int \left| p(\left. S_{1,k} = t \right| S_{1,n}=na)-p_{\widetilde{S}_{1,k}}(t) \right| dt. 
\end{equation}

\noindent
We easily check the following invariance property: for any $t \in \mathbb{R}^{d}$, 
\begin{equation}\label{invaCond}
p(\left. S_{1,k} = t \right| S_{1,n}=na) = p(\left. \widetilde{S}_{1,k} = t \right| \widetilde{S}_{1,n} = na) = p_{\widetilde{S}_{1,k}}(t) \left( \frac{p_{\widetilde{S}_{k+1,n}}(na-t)}{p_{\widetilde{S}_{1,n}}(na)} \right).
\end{equation}

\noindent
Let $f_{\ell,m}$ be the density of $\widetilde{S}_{\ell,m} := \sum\limits_{j=\ell}^{m} \widetilde{X}_{j}$. Then, by $(\ref{suffi})$, $(\ref{scheffe})$ and $(\ref{invaCond})$, 
\begin{equation}
d_{TV}^{\mathcal{B}^{dk}}\left(Q_{nak};\widetilde{P}_{1:k}\right) = 
\displaystyle\int \left| \frac{f_{k+1,n}(na-t)}{f_{1,n}(na)}-1 \right| f_{1,k}(t) dt. 
\end{equation}

\noindent
Let  $g_{\ell,m}$ be the density of the normalized r.v. associated to $\widetilde{S}_{\ell,m}$. Expressing the $f_{\ell,m}$'s in function of the $g_{\ell,m}$'s, we obtain that 
\begin{equation}\label{renorma}
\frac{f_{k+1,n}(na-t)}{f_{1,n}(na)} = 
\left( \frac{\det \left(\mathrm{Cov}(\widetilde{S}_{1,n})\right)}{\det \left( \mathrm{Cov}(\widetilde{S}_{L_{n}})\right)} \right)^{1/2} 
\frac{g_{k+1,n}(t^{\#})}{g_{1,n}(0)}, 
\end{equation}

\noindent
where $t^{\#} \in \mathbb{R}^{d}$ is $t$ standardized for $u \mapsto f_{k+1,n}(na-u)$, that is,
\begin{equation}
t^{\#} := \mathrm{Cov}\left(\widetilde{S}_{k+1,n}\right)^{-1/2}\left(na-t- \mathbb{E}[\widetilde{S}_{k+1,n}]\right) = (n-k)^{-1/2} \widetilde{B}_{k+1,n} \left[\sum\limits_{i=1}^{k} m_{i}(\theta^{a}_{n})-t \right].
\end{equation}

\noindent
Then, we perform Edgeworth expansions of $g_{k+1,n}$ and $g_{1,n}$ in order to estimate $\frac{f_{k+1,n}(na-t)}{f_{1,n}(na)}$ uniformly in $t$. This is obtained in Lemmas \ref{3.1.DF} and \ref{3.2.DF} below. Finally, we conclude by Lemma \ref{3.3.DF}.

\subsection{Auxiliary Lemmas}\label{auxLem}

Let $t^{\flat} \in \mathbb{R}^{d}$ be $t$ standardized for $f_{1,k}$, that is, 
\begin{equation}
t^{\flat} := \mathrm{Cov}\left(\widetilde{S}_{1,k}\right)^{-1/2}\left(t-\mathbb{E}[\widetilde{S}_{1,k}]\right) = k^{-1/2} \widetilde{B}_{1,k} \left[t-\sum\limits_{i=1}^{k} m_{i}(\theta^{a}_{n})\right].
\end{equation}

\noindent
Therefore, $t^{\flat}$ and $t^{\#}$ are linked by 
\begin{equation}\label{tANDt}
t^{\#} = -\left[ \frac{k}{n-k} \right]^{1/2} \widetilde{B}_{k+1,n} \left(\widetilde{B}_{1,k}\right)^{-1} t^{\flat}. 
\end{equation}

\begin{rem}\label{ttRemark}
By $(\ref{tANDt})$ and since $\frac{k}{n-k}=\frac{k}{n}+O\left(\frac{k^{2}}{n^{2}}\right)$,  we have that 
\begin{equation}
\left\| t^{\#} \right\|^{2} = O\left(\frac{k}{n-k} \right) \left\| t^{\flat}\right\|^{2} = \left[ O\left(\frac{k}{n} \right) + O\left(\frac{k^{2}}{n^{2}} \right) \right] \left\| t^{\flat}\right\|^{2}. 
\end{equation}
\end{rem}

\bigskip

\begin{lem}\label{3.1.DF}
Let $0<\theta_{1}<1$. Then, uniformly in $t$ and $k$, for $k<\theta_{1}n$, 
\begin{equation}\label{avEdge}
\frac{f_{k+1,n}(na-t)}{f_{1,n}(na)} = 
\left[ 1+O\left(\frac{k}{n}\right) \right]^{1/2} 
\exp \left(-\frac{\left\| t^{\#} \right\|^{2}}{2} \right) +
O\left(\frac{\sqrt{k}}{n}\right) \left\| t^{\flat} \right\| + O\left(\frac{1}{n}\right). 
\end{equation}
\end{lem}

\begin{proof}
See Appendix. 
\end{proof}

\begin{lem}\label{3.2.DF}
If $k = o(n)$, and $\left\| t^{\#} \right\| < \theta_{2} < \infty$, then uniformly in $t$, 
\begin{align*} 
\frac{f_{k+1,n}(na-t)}{f_{1,n}(na)} &= 
1+O\left( \frac{k}{n} \right) + O\left( \frac{k}{n} \right) \left\| t^{\flat} \right\|^{2} + O\left( \frac{\sqrt{k}}{n} \right) \left\| t^{\flat} \right\| \\
&+ O\left( \frac{k^{2}}{n^{2}}\right) \left( \left\|t^{\flat}\right\|^{2} + \left\|t^{\flat}\right\|^{4} \right) + O\left( \frac{1}{n} \right) + O\left( \frac{k^{2}}{n^{2}}\right). 
\end{align*}
\end{lem}

\begin{proof}
See Appendix. 
\end{proof}

\begin{lem}\label{3.3.DF}
For $\mu \in \left\{1,2,3,4\right\}$,  
\begin{equation*}
\int \left\| t^{\flat} \right\|^{\mu} f_{1,k}(t)dt = O(1).
\end{equation*}
\end{lem}

\begin{proof}
See Appendix. 
\end{proof}

\subsection{End of proof}

We are now able to prove $(\ref{myDF})$. Setting 
$\kappa(t) := \left| \frac{f_{k+1,n}(na-t)}{f_{1,n}(na)}-1 \right| f_{1,k}(t)$, for fixed $\theta_{2}>0$, 
\begin{equation}
d_{TV}^{\mathcal{B}^{dk}}\left(Q_{nak};\widetilde{P}_{1:k}\right) = 
\int\limits_{\left\|t^{\#}\right\| \leq \theta_{2}}\kappa(t)dt +
\int\limits_{\left\|t^{\#}\right\| > \theta_{2}}\kappa(t)dt.  
\end{equation}

\noindent
Now, Lemmas $\ref{3.2.DF}$ and $\ref{3.3.DF}$ imply that 
\begin{equation}
\int\limits_{\left\|t^{\#}\right\| \leq \theta_{2}}\kappa(t)dt = 
O\left(\frac{k}{n}\right)  
\end{equation}

\noindent
On the other hand, by Lemma \ref{3.1.DF}, 
\begin{equation}\label{3.1.DFconseq}
\frac{f_{k+1,n}(na-t)}{f_{1,n}(na)} = 
\left[ 1+O\left(\frac{k}{n}\right) \right]
\exp \left(-\frac{\left\| t^{\#} \right\|^{2}}{2} \right) +
O\left(\frac{\sqrt{k}}{n}\right) \left\| t^{\flat} \right\| + O\left(\frac{1}{n}\right) +O\left(\frac{k^{2}}{n^{2}}\right), 
\end{equation}

\noindent
which, combined to Lemma $\ref{3.3.DF}$, implies that 
\begin{equation*}
\int\limits_{\left\|t^{\#}\right\|>\theta_{2}}\kappa(t)dt = 
\int\limits_{\left\|t^{\#}\right\|>\theta_{2}} 
\left| \left[1+O\left( \frac{k}{n} \right) \right] 
\exp \left(-\frac{\left\| t^{\#} \right\|^{2}}{2}\right)-1 \right| f_{k}(t)dt + o\left( \frac{k}{n} \right)  
\end{equation*}

\noindent
Now, $\left\|t^{\#}\right\|>\theta_{2}$ implies that there exists a constant $B$, with $0<B<\infty$, such that $\left\|t^{\flat}\right\|> B\left(\frac{n}{k}\right)^{1/2} \theta_{2}$. So, by Markov's inequality and Lemma $\ref{3.3.DF}$, 
\begin{equation}
\int\limits_{\left\|t^{\#}\right\|>\theta_{2}} f_{k}(t)dt
\leq \frac{\int \left\| t^{\flat} \right\|^{4} f_{1,k}(t)dt}{\left[ B\left(\frac{n}{k}\right)^{1/2} \theta_{2} \right]^{4}}
= O\left( \frac{k^{2}}{n^{2}}  \right). 
\end{equation}

\section{Appendix}
\label{sec:5}

\subsection{Proof of Lemma $\ref{lemSteep}$}

\begin{proof}
We adapt the proof of Theorem 9.1. (ii)$^{*}$ in \cite{Barndorff-Nielsen 2014}. It is enough to prove that 
\begin{equation}
\mathrm{int}(C_{X}) \subset \mathrm{dom}((\overline{\kappa}_{n})^{*}) \subset C_{X}. 
\end{equation}

\noindent
We first prove that $\mathrm{dom}((\overline{\kappa}_{n})^{*}) \subset C_{X}$ and we may assume that $C_{X} \neq \mathbb{R}^{d}$. Let $t \notin C_{X}$. Then, there exists a hyperplane $H$ separating $C_{X}$ and $t$. Let $e$ be the unit vector in $\mathbb{R}^{d}$ which is normal to $H$ and such that $C_{X}$ lies in the negative halfspace determined by $H$ and $e$. Since $t \notin C_{X}$, then for all $x \in C_{X}$, $\langle e,t \rangle > 0 \geq \langle e,x \rangle$. We deduce readily that for all $i \geq 1$, since $C_{X_{i}}=C_{X}$ (by $(\mathcal{S}upp)$),  
\begin{equation*}
\exp(-r\langle e,t \rangle) \mathbb{E}[\exp(r\langle e,X_{i} \rangle)] \longrightarrow \infty 
\quad \textrm{as} \quad
r \rightarrow \infty. 
\end{equation*}

\noindent 
Therefore, for all $n \geq 1$, 
\begin{equation*}
\lim\limits_{r \rightarrow \infty} \langle re,t \rangle-\overline{\kappa}_{n}(re) = 
\lim\limits_{r \rightarrow \infty} \frac{1}{n} \left[ \sum\limits_{i=1}^{n} \left( \langle re,t \rangle-\kappa_{i}(re) \right) \right] = \infty.
\end{equation*}

\noindent
So $(\overline{\kappa}_{n})^{*}(t) = \sup\limits_{\theta \in \Theta} \left\{ \langle \theta, t \rangle -\overline{\kappa}_{n}(\theta) \right\} = \infty$, which means that $t \notin \mathrm{dom}((\overline{\kappa}_{n})^{*})$. 

\noindent\\
Next, we prove that $\mathrm{int}(C_{X}) \subset \mathrm{dom}((\overline{\kappa}_{n})^{*})$. Let $t \in \mathrm{int}(C_{X})$. For any $n \geq 1$, set $S_{1,n}:=\sum\limits_{i=1}^{n}X_{i}$. Then, by the independence of the $X_{i}$'s and Jensen's inequality applied to the concave function $u \mapsto u^{1/n}$,   
\begin{equation}\label{JensenInq}
\forall \theta \in \mathbb{R}^{d}, \quad 
\overline{\kappa}_{n}(\theta) \geq \log \mathbb{E}\left[ \exp \langle \theta, S_{1,n}/n \rangle \right]. 
\end{equation}

\noindent
Now, by Markov's inequality, for any $\theta, \tau \in \mathbb{R}^{d}$,
\begin{equation}\label{MarkovInq}
\exp(-\langle \theta,\tau \rangle) \mathbb{E}\left[\exp\langle \theta,S_{1,n}/n \rangle\right] \geq 
\rho_{n}(\tau),   
\end{equation}

\noindent
where $\rho_{n}(\tau) := \inf \left\{ P\left( \langle e, S_{1,n}/n \rangle \geq \langle e, \tau \rangle \right) : \left\|e\right\|=1 \right\}$. Combining $(\ref{JensenInq})$ and $(\ref{MarkovInq})$, we obtain that for any $\theta, \tau \in \mathbb{R}^{d}$, 
\begin{equation}\label{logRho}
\langle \theta,\tau \rangle-\overline{\kappa}_{n}(\theta) \leq 
\langle \theta,\tau \rangle-\log \mathbb{E}\left[ \exp \langle \theta, S_{1,n}/n \rangle \right] \leq -\log \rho_{n}(\tau).
\end{equation} 

\noindent 
Now, since the $X_{i}$'s are independent, $(\mathcal{S}upp)$ implies that $S_{X} \subset \mathrm{supp}(S_{1,n}/n)$. Then, Lemma 9.2. in \cite{Barndorff-Nielsen 2014} asserts that for any $\tau \in \mathrm{int}(C_{S_{1,n}/n})$, we have that $\rho_{n}(\tau)>0$. Since $t \in \mathrm{int}(C_{X}) \subset \mathrm{int}(C_{S_{1,n}/n})$, we deduce from $(\ref{logRho})$ that 
\begin{equation*}
(\overline{\kappa}_{n})^{*}(t) = \sup\limits_{\theta \in \Theta} \left\{ \langle \theta, t \rangle -\overline{\kappa}_{n}
(\theta) \right\} \leq -\log \rho_{n}(t) < \infty, 
\end{equation*}

\noindent 
which means that $t \in \mathrm{dom}((\overline{\kappa}_{n})^{*})$.
\end{proof}

\subsection{Proof of Lemmas $\ref{VdefPos}$-$\ref{BdCf2}$}

\subsubsection{Proof of Lemma $\ref{VdefPos}$}

\begin{proof}
Recall from the Courant-Fischer min-max theorem that for any symmetric matrix $M$,  
\begin{equation}\label{CFmm}
\lambda_{min}(M) = \inf\limits_{x \in \mathbb{R}^{d} \setminus \left\{0\right\}} \frac{x^{t} M x}{x^{t} x}
\qquad \textrm{and} \qquad
\lambda_{max}(M) = \sup\limits_{x \in \mathbb{R}^{d} \setminus \left\{0\right\}} \frac{x^{t} M x}{x^{t} x}.
\end{equation}

\noindent
Fix $n \geq 1$. By $(\ref{CFmm})$, for any $\theta \in K^{a}$, $i \in L_{n}$ and $x \neq 0$,  
\begin{equation*}
\frac{x^{t}C_{i}^{\theta}x}{x^{t}x} \geq 
\lambda_{min}(C_{i}^{\theta}) \geq \lambda_{min}^{K^{a}}. 
\end{equation*}

\noindent
So, by linearity, for any $\theta \in K^{a}$ and $x \neq 0$,
\begin{equation}\label{lineari}
\frac{x^{t} \widetilde{V}_{L_{n}}^{\theta}x}{x^{t}x} = \frac{1}{\left|L_{n}\right|} \sum\limits_{i \in L_{n}} \frac{x^{t}C_{i}^{\theta}x}{x^{t}x} \geq \lambda_{min}^{K^{a}}. 
\end{equation}

\noindent
Then, it follows from $(\ref{CFmm})$ and $(\ref{lineari})$ that
\begin{equation}\label{Lmin}
\lambda_{min}(\widetilde{V}^{\theta_{n}^{a}}_{L_{n}}) \geq
\inf\limits_{\theta \in K_{a}} \lambda_{min} (\widetilde{V}_{L_{n}}^{\theta}) = 
\inf\limits_{\theta \in K_{a}} \left( \inf\limits_{x \in \mathbb{R}^{d} \setminus \left\{0\right\}}  \frac{x^{t} \widetilde{V}_{L_{n}}^{\theta} x}{x^{t}x} \right) \geq \lambda_{min}^{K^{a}}. 
\end{equation}

\noindent
By $\left( \mathcal{C}v \right)$, $\lambda_{min}^{K^{a}}>0$, so  $\widetilde{V}^{\theta_{n}^{a}}_{L_{n}}$ is indeed positive-definite. Now, by $(\ref{CFmm})$, for all $x \in \mathbb{R}^{d}$, 
\begin{equation}
\lambda_{min} \left(\widetilde{B}_{L_{n}} \right) \left\|x\right\| \leq \left\|\widetilde{B}_{L_{n}} x\right\|
\leq \lambda_{max} \left(\widetilde{B}_{L_{n}} \right) \left\|x\right\|. 
\end{equation}

\noindent
By $(\ref{defB})$ and $(\ref{Lmin})$, $\lambda_{max} \left(\widetilde{B}_{L_{n}} \right) = \left[\lambda_{min}\left(\widetilde{V}_{L_{n}} \right)\right]^{-1/2} \leq \left[\lambda_{min}^{K^{a}}\right]^{-1/2}$. Similarly, we have that $\lambda_{min} \left(\widetilde{B}_{L_{n}} \right) \geq \left[\lambda_{max}^{K^{a}}\right]^{-1/2}$. 
\end{proof}

\subsubsection{Proof of Lemma $\ref{AM4lem}$}

\begin{proof}
By Lemma $\ref{VdefPos}$, $\left( \mathcal{C}v \right)$ and $\left( \mathcal{A}m4 \right)$, for all $n \geq 1$ and $i \in L_{n}$, 
\begin{equation*}
\mathbb{E}\left[ \left\| \widetilde{B}_{L_{n}} 
\left( \widetilde{X}_{i}-m_{i}(\theta^{a}_{n})\right) \right\|^{4} \right] 
\leq 
\left[\lambda_{min}^{K^{a}} \right]^{-2} 
~\sup\limits_{i \geq 1} ~\sup\limits_{\theta \in K^{a}}
~\mathbb{E}\left[ \left\| \widetilde{X}_{i}^{\theta}-m_{i}(\theta) \right\|^{4} \right] < \infty. 
\end{equation*}
\end{proof}

\subsubsection{Proof of Lemma $\ref{BdCf1}$}

\begin{proof}
Let $p > \frac{1}{\delta_{K^{a}}}$. Then, for any $n \geq N_{p}$, and $\alpha_{n}-1 \leq m \leq \beta_{n}-p$, 
\begin{equation*}
\int\limits_{\mathbb{R}^{d}} \widetilde{g}_{m,n}^{p}(t)dt =
\int\limits_{ \left\{ \left\| \widetilde{B}_{L_{n}} ~t \right\| \geq R_{K^{a}} \right\} } \widetilde{g}_{m,n}^{p}(t)dt ~ +
\int\limits_{ \left\{ \left\| \widetilde{B}_{L_{n}} ~t \right\| < R_{K^{a}} \right\} } \widetilde{g}_{m,n}^{p}(t) dt. \end{equation*}

\noindent
Then, for $\left\| \widetilde{B}_{L_{n}} ~t \right\| \geq R_{K^{a}}$, we obtain from 
$(\mathcal{C}f1)$ that for all $m+1 \leq i \leq m+p$, 
\begin{equation}\label{xiGros}
\left| \widetilde{\xi}_{i} \left( \widetilde{B}_{L_{n}} ~t \right) \right| \leq \frac{C_{K^{a}}}{\left\| \widetilde{B}_{L_{n}} ~t \right\|^{\delta_{K^{a}}}} \leq 
\frac{C_{K^{a}} 
\left[\lambda_{\max}^{K^{a}}\right]^{\delta_{K^{a}}/2}}{\left\| t \right\|^{\delta_{K^{a}}}}, 
\end{equation}

\noindent 
where the last inequality follows from $(\ref{maxBmin})$. So, by $(\ref{xiGros})$ and the condition on $p$, 
\begin{equation*}
\int\limits_{ \left\{ \left\| \widetilde{B}_{L_{n}} ~t \right\| \geq R_{K^{a}} \right\} } \widetilde{g}_{m,n}^{p}(t)dt \leq 
\left(C_{K^{a}}\right)^{p}
\left[ \lambda_{\max}^{K^{a}} \right]^{(p\delta_{K^{a}})/2}
\int\limits_{\mathbb{R}^{d}} \frac{1}{\left\| t \right\|^{p\delta_{K^{a}}}} dt < \infty. 
\end{equation*}

\noindent
By the change of variables formula, since $\det\left( \widetilde{B}_{L_{n}}\right) \geq  \left[ \lambda_{min}\left( \widetilde{V}_{L_{n}}^{-1/2} \right)\right]^{d} \geq \left[ \lambda_{max}^{K^{a}}\right]^{-d/2}$, 
\begin{equation*}
\int\limits_{ \left\{ \left\| \widetilde{B}_{L_{n}} ~t \right\| < R_{K^{a}} \right\}} \widetilde{g}_{m,n}^{p}(t)dt =
\left[ \det\left( \widetilde{B}_{L_{n}}\right) \right]^{-1}
\int\limits_{ \left\{ \left\| y \right\| < R_{K^{a}} \right\} } \widetilde{g}_{m,n}^{p}(y) dy < \infty.
\end{equation*}
\end{proof}

\subsubsection{Proof of Lemma $\ref{BdCf2}$}

\begin{proof}
Let $p \geq 1$ and $b>0$. For all $n \geq 1$ and $\left\|t\right\|>b$, we deduce from Lemma $\ref{VdefPos}$ that $\left\|\widetilde{B}_{L_{n}} t\right\| \geq \left[\lambda_{max}^{K^{a}}\right]^{-1/2} \left\|t\right\| > \left[\lambda_{max}^{K^{a}}\right]^{-1/2} b$. So, for all $n \geq N_{p}$, $\alpha_{n}-1 \leq m \leq \beta_{n}-p$ and $m+1 \leq i \leq m+p$,
\begin{equation*}
\sup\limits_{\|t\| > b}
~\left| \widetilde{\xi}_{i} \left( \widetilde{B}_{L_{n}} t \right) \right| \leq 
\sup \left\{~\left| \widetilde{\xi}_{i}^{\theta}(u) \right| : i \geq 1 ~;~ \theta \in K ~;~ \left\|u\right\| \geq \left[\lambda_{max}^{K^{a}}\right]^{-1/2} b \right\} < 1, 
\end{equation*}

\noindent
where the last inequality follows from $(\mathcal{C}f2)$ applied with $\beta=\left[\lambda_{max}^{K^{a}}\right]^{-1/2} b$. This implies $(\ref{ConcluBeta})$. 
\end{proof}

\subsection{Proof of Lemmas of section \ref{auxLem}}

\subsubsection{Proof of Lemma \ref{3.1.DF}}

\begin{proof}
The assumptions allow us to perform Edgeworth expansions to obtain that 
\begin{equation}\label{EdgewN}
g_{1,n}(0)=1+O\left(\frac{1}{n}\right)
\end{equation}

\noindent 
and 
\begin{equation}\label{EdgewD}
g_{k+1,n}(t^{\#}) = 
\psi(t^{\#}) \left[ 1 + \frac{1}{(n-k)^{1/2}} \sum\limits_{|\nu|=3} \overline{\chi}_{\nu,L_{n}} H_{3}^{(\nu)}(t^{\#}) \right]
+ O\left(\frac{1}{n-k}\right).
\end{equation}

\noindent
By (6.21) in \cite{Bhattacharya and Rao 1976}, for $\left|\nu \right|=3$, the $\nu$-cumulant of a centered r.v. is equal to its $\nu$-moment. Furthermore, the cumulants are invariant by any translation. So, 
\begin{equation}\label{cumulTau}
\overline{\chi}_{\nu,L_{n}} = 
\frac{1}{n-k} \sum\limits_{k+1 \leq i \leq n} \mathbb{E} [(\widetilde{B}_{L_{n}} \widetilde{X}_{i})^{\nu}]
\leq 
\frac{1}{n-k} \sum\limits_{k+1 \leq i \leq n} 
\mathbb{E} \left[ \left\| \widetilde{B}_{L_{n}} \widetilde{X}_{i} \right\|_{\infty}^{3} \right] = O(1). 
\end{equation}

\noindent
Now, for all $\nu$, $H_{3}^{(\nu)}(t^{\#})=(t^{\#})_{i} P_{i}(t^{\#})$, for some $1 \leq j \leq d$ and a polynomial $P_{i}$. So,  
\begin{equation}\label{Hermite}
\psi(t^{\#})H_{3}^{(\nu)}(t^{\#}) = 
O\left(\left\|t^{\#}\right\|_{\infty}\right) = 
O\left(\sqrt{k/n}\right).\left\|t^{\flat}\right\|. 
\end{equation} 

\noindent
We deduce readily from $(\ref{EdgewN})$, $(\ref{EdgewD})$, $(\ref{cumulTau})$ and $(\ref{Hermite})$ 
that 
\begin{equation}\label{gONg}
\frac{g_{k+1,n}(t^{\#})}{g_{1,n}(0)} = \psi(t^{\#}) +
O\left( \frac{\sqrt{k}}{n} \right) \left\|t^{\flat}\right\| + O\left(\frac{1}{n}\right). 
\end{equation} 

\noindent
On the other hand, 
\begin{equation*}
\Delta_{n} :=
\frac{\det \left( \mathrm{Cov}\left(\widetilde{S}_{1,n}\right) \right)}{\det \left( \mathrm{Cov}\left(\widetilde{S}_{L_{n}}\right)\right)} =
\det \left[ I_{d} + \left( \mathrm{Cov}\left(\widetilde{S}_{k+1,n}\right)\right)^{-1} \mathrm{Cov}\left(\widetilde{S}_{1,k}\right) \right], 
\end{equation*} 

\noindent
where $\left\| \left( \mathrm{Cov}\left(\widetilde{S}_{k+1,n}\right) \right)^{-1} \right\| \leq \left((n-k)\lambda_{min}^{K^{a}} \right)^{-1}$ and $\left\| \mathrm{Cov}\left(\widetilde{S}_{1,k}\right) \right\| \leq k\lambda_{max}^{K^{a}}$. So, 
\begin{equation*}
\left\| \left( \mathrm{Cov}\left(\widetilde{S}_{k+1,n}\right) \right)^{-1} \mathrm{Cov}\left(\widetilde{S}_{1,k}\right) \right\| = O\left( \frac{k}{n-k} \right). 
\end{equation*}  

\noindent
Consequently, performing a Taylor expansion of $\det$ at $I_{d}$, 
\begin{equation}\label{deltaN}
\Delta_{n} = 1+\mathrm{Tr}\left[ \left( \mathrm{Cov}\left(\widetilde{S}_{k+1,n}\right) \right)^{-1} \mathrm{Cov}\left(\widetilde{S}_{1,k}\right) \right] + o\left(\frac{k}{n-k}\right) = 1+O\left(\frac{k}{n}\right).
\end{equation}

\noindent
Then, we deduce $(\ref{avEdge})$ from $(\ref{renorma})$, $(\ref{gONg})$ and $(\ref{deltaN})$. 
\end{proof}

\subsubsection{Proof of Lemma \ref{3.2.DF}}

\begin{proof}
Since $\left\| t^{\#} \right\|$ is bounded, the Taylor-Lagrange inequality and Remark \ref{ttRemark} imply that 
\begin{align*}
\exp \left(-\frac{\left\| t^{\#} \right\|^{2}}{2} \right) &=  
1 -\frac{\left\| t^{\#} \right\|^{2}}{2} + O\left(\left\| t^{\#} \right\|^{4}\right) \\
&= 1 + O\left(\frac{k}{n}\right) \left\| t^{\flat} \right\|^{2} + O\left(\frac{k^{2}}{n^{2}}\right) \left( \left\|t^{\flat}\right\|^{2} + \left\|t^{\flat}\right\|^{4} \right). 
\end{align*}

\noindent
Therefore, in $(\ref{3.1.DFconseq})$, 
\begin{equation*}
\left[ 1 + O\left( \frac{k}{n} \right) \right] 
\exp \left(-\frac{\left\| t^{\#} \right\|^{2}}{2} \right) = 
1 + O\left( \frac{k}{n} \right) + O\left( \frac{k}{n} \right) \left\| t^{\flat} \right\|^{2} + O\left( \frac{k^{2}}{n^{2}} \right) \left( \left\|t^{\flat}\right\|^{2} + 
\left\|t^{\flat}\right\|^{4} \right).
\end{equation*}
\end{proof}

\subsubsection{Proof of Lemma \ref{3.3.DF}}

\begin{proof}
We only need to prove the case $\mu = 4$. Setting $I_{4} := \int \left\| t^{\flat} \right\|^{4}f_{k}(t)dt$, we have that 
\begin{equation*}
I_{4} =
\int\displaystyle
\left\| k^{-1/2} \widetilde{B}_{1,k} \left( \sum\limits_{i=1}^{k} \widetilde{X}_{i}-m_{i}(\theta^{a}_{n})  \right) \right\|^{4}dP 
\leq k^{-2} \left\| \widetilde{B}_{1,k} \right\|^{4} 
\int\displaystyle \left\| \sum\limits_{i=1}^{k} 
\widetilde{X}_{i}-m_{i}(\theta^{a}_{n}) \right\|^{4}dP. 
\end{equation*}

\noindent
For any $n \geq 1$ and $1 \leq i \leq k$, set $Z_{in}:=\widetilde{X}_{i}-m_{i}(\theta^{a}_{n})$. By $(\mathcal{C}v)$, we only need to prove that 
$\mathbb{E} \left[\left\| \sum\limits_{i=1}^{k} 
Z_{in} \right\|^{4}\right]=O(k^{2})$. First, we treat the case $d=1$. Then, 
\begin{equation*}
\left\| \sum\limits_{i=1}^{k} 
Z_{in} \right\|^{4} =
\left(\sum\limits_{i=1}^{k}Z_{in}\right)^{4} = 
\sum\limits_{i=1}^{k}(Z_{in})^{4} + 3\sum\limits_{1\leq i<i' \leq k}(Z_{in})^{2}(Z_{i'n})^{2} + V_{n}, 
\end{equation*}

\noindent
where $\mathbb{E}[V_{n}]=0$. Indeed, for any $n \geq 1$, the 
$(Z_{in})_{1 \leq i \leq k}$ are independent and centered. Now,  
$\left( \mathcal{A}m4 \right)$ implies that 
$\mathbb{E}\left[\sum\limits_{i=1}^{k}(Z_{in})^{4}\right] = O(k)$
 and $\mathbb{E}\left[\sum\limits_{1\leq i<i' \leq k}(Z_{in})^{2}(Z_{i'n})^{2}\right] = O(k^{2})$, which proves the case $d=1$. 

\noindent\\ 
Using that the components of independent r.v.'s are independent, we may conclude the proof by induction, as follows. Assuming that the $(Z_{in})$ are $\mathbb{R}^{d+1}$-valued, 
\begin{equation*}
\left\| \sum\limits_{i=1}^{k}Z_{in} \right\|^{4} = 
\left\{ \left\| \sum\limits_{i=1}^{k}Z_{in} \right\|^{2} \right\}^{2} =
\left\{ \sum\limits_{j=1}^{d} \left[\left(\sum\limits_{i=1}^{k}Z_{in}\right)_{j}\right]^{2}
+ \left[\left(\sum\limits_{i=1}^{k}Z_{in}\right)_{d+1}\right]^{2}
\right\}^{2}.
\end{equation*}

\noindent
Now, we may apply the induction hypothesis after expanding the outermost square. For the cross term, we apply the Cauchy-Schwartz inequality to its expectation. We conclude readily that $I_{4}=O(1)$.  
\end{proof}

\noindent\\
\textbf{Acknowledgements}

\noindent
The author wishes to thank Prof. Fabrice Gamboa for helpful discussions.

\bibliographystyle{natbib}

\end{document}